\newif\ifAMS
\AMStrue\usepackage{amssymb}}{}
\newtheorem{thm}{Theorem}[section]
\newtheorem{cor}[thm]{Corollary}
\newtheorem{prop}{Proposition}[section]
\newtheorem{lem}[thm]{Lemma}
\theoremstyle{definition}
\newtheorem{defn}{Definition}[section]
\numberwithin{equation}{section}
\begin{document}
\bigskip
\bigskip
\bigskip
\bigskip
\bigskip

\title{Tilting and Refined \\ 
Donaldson-Thomas Invariants}

\bigskip
\bigskip
\bigskip
\bigskip

\author{Magnus Engenhorst\footnote{engenhor@math.uni-bonn.de}\\
Mathematical Institute, University of Bonn\\
Endenicher Allee 60, 53115 Bonn, Germany\\[5mm]}

\maketitle

\begin{abstract}
We study tilting for the heart $\mathcal{A}$ of the canonical t-structure of the finite-dimensional derived category of the Ginzburg algebra for a quiver with potential $(Q,W)$. We give conditions on that the stable objects for a central charge on $\mathcal{A}$ define a sequence of simple tilts from $\mathcal{A}$ to $\mathcal{A}[-1]$. On that conditions the refined Donaldson-Thomas invariant associated to $(Q,W)$ is independent of the chosen central charge.
\end{abstract}

\bigskip

\section{Introduction}

For a class of supersymmetric quantum field theories the BPS states are encoded as stable representations of a quiver with potential, see e.g. \cite{1,2}. An algorithm to derive the stable representations without going into linear algebra directly was developed in \cite{50}. This mutation method is based on the idea that the BPS spectrum of a theory is also encoded in the Seiberg dual theory. The mathematically counterpart of Seiberg duality \cite{21} is mutation of quivers with potentials $(Q,W)$ \cite{22,120}. An idea of Bridgeland is that mutation is modeled by tilting of hearts of t-structures of triangulated categories \cite{3}. Inspired from the mutation method in physics we study tilting for the heart of the canonical t-structure of the finite-dimensional derived category of the Ginzburg algebra \cite{90} of $(Q,W)$.\\

In sections 2 and 3 we review Bridgeland stability conditions and tilting theory. In section 4 we consider hearts $\mathcal{A}$ of bounded t-structures of triangulated categories of finite length with finitely many simple objects such that we can tilt indefinitely. We use a version of the mutation method to prove the main result (Prop. 4.1) of this work: A discrete central charge on such heart $\mathcal{A}$ with finitely many stable objects induces a sequence of simple tilts from $\mathcal{A}$ to $\mathcal{A}[-1]$. Conversely, this provides an algorithm to derive the stable objects of $\mathcal{A}$. In section 5 we study examples. An example is $\mathcal{H}_{Q}:=mod-kQ$, the category of representations of an acyclic quiver $Q$ inside the bounded derived category of $\mathcal{H}_{Q}$. If we choose a 2-acyclic quiver $Q$ with non-degenerate potential $W$ in the sense of \cite{80} we can mutate $(Q,W)$ indefinitely which implies we can tilt the heart $\mathcal{A}$ of the canonical t-structure of the finite-dimensional derived category $D_{fd}(\Gamma)$ of the Ginzburg algebra $\Gamma$ of $(Q,W)$ indefinitely. The stable objects with respect to a discrete central charge on $\mathcal{A}$ in the order of decreasing phase define a path in the exchange graph of $D_{fd}(\Gamma)$ (Theorem \ref{theorem}). As a corollary we get that the Jacobi algebra of such $(Q,W)$ is finite-dimensional. In the special case of an acyclic quiver $Q$ we show that the stable objects of $\mathcal{H}_{Q}$ induce a sequence of simple tilts from $\mathcal{A}$ to $\mathcal{A}[-1]$ in $D_{fd}(\Gamma)$ (Corollary 5.4). In section 6 we study the relation to maximal green sequences. The refined Donaldson-Thomas (DT) invariant \cite{150, 160} of a quiver with potential $(Q,W)$ can be read off the mutation method. The theory developed by Kontsevich and Soibelman in \cite{160} suggests it is independent of the chosen central charge.  We prove in section 7 that the refined DT invariants as defined in \cite{140} are equal for discrete central charges with finitely many stable objects (Prop. 7.1).

\section{Tilting}

\begin{defn}\cite{20000}
A \textit{t-structure} on a triangulated category $\mathcal{D}$ is a strictly full subcategory $\mathcal{F}\subset \mathcal{D}$ such that
\begin{enumerate}
\item $\mathcal{F}[1]\subset \mathcal{F}$
\item for every object $E\in T\mathcal{D}$ there is a triangle in $\mathcal{D}$ $$F\longrightarrow E \longrightarrow G\longrightarrow$$
with $F\in\mathcal{F}$ and $G\in\mathcal{F}^{\bot}$ where $$\mathcal{F}^{\bot}=\left\{E\in \mathcal{D}|Hom(A,E)=0, \forall A\in \mathcal{F}\right\}.$$
\end{enumerate}
The \textit{heart of a t-structure} $\mathcal{F}\subset \mathcal{D}$ is the full subcategory $\mathcal{A}=\mathcal{F}\cap\mathcal{F}^{\bot}[1]$.
\end{defn}

A t-structure $\mathcal{F}\subset \mathcal{D}$ is \textit{bounded} if $$\mathcal{D}=\bigcup_{i,j\in\mathbb{Z}}\mathcal{F}[i]\cap \mathcal{F}^{\bot}[j].$$ 

For two hearts $\mathcal{A}_{1},\mathcal{A}_{2}$ with associated bounded t-structures $\mathcal{F}_{1}, \mathcal{F}_{2}\subset \mathcal{D}$ we say $\mathcal{A}_{1}\leq \mathcal{A}_{2}$ if and only if $\mathcal{F}_{2}\subset\mathcal{F}_{1}$.

\begin{lem}\cite{5}
A bounded t-structure is determined by its heart. Moreover, if $\mathcal{A}\subset\mathcal{D}$ is a full additive subcategory of a triangulated category $\mathcal{D}$, then $\mathcal{A}$ is the heart of a bounded t-structure on $\mathcal{D}$ if and only if the following conditions hold:
\begin{enumerate}
\item if $A$ and $B$ are objects of $\mathcal{A}$, then $Hom_{\mathcal{D}}(A,B[k])=0$ for $k<0$,
\item for every nonzero object $E\in\mathcal{D}$ there are integers $m<n$ and a collection of triangles 
$$0=\xymatrixcolsep{4mm}\xymatrix{E_{m}\ar[rr] && E_{m+1}\ar[ld]\ar[rr] && E_{m+2}\ar[ld]\\ \
			& A_{m+1}\ar @{-->}[lu]			& & A_{m+2}\ar @{-->}[lu]}\rightarrow\cdots\rightarrow\xymatrixcolsep{4mm}\xymatrix{ E_{n-1}\ar[rr] && E_{n}\ar[ld]\\			
			 & A_{n}\ar @{-->}[lu]}=E$$
			 with $A_{i}[i]\in\mathcal{A}$ for all $i$.
\end{enumerate} 
\end{lem} 

The objects $A_{i}[i]\in\mathcal{A}$ are called cohomology objects of $E$ with respect to the given t-structure in analogy to the standard t-structure of the derived category of an Abelian category and are denoted $H^{i}(E)$.

\begin{defn}
A \textit{torsion pair} in an Abelian category $\mathcal{A}$ is a pair of full subcategories $(\mathcal{T}, \mathcal{F})$ satisfying
\begin{enumerate}
\item $Hom_{\mathcal{A}}(T,F)=0$ for all $T\in\mathcal{T}$ and $F\in\mathcal{F}$;
\item every object $E\in\mathcal{A}$ fits into a short exact sequence
\begin{align}
0\longrightarrow T\longrightarrow E\longrightarrow F\longrightarrow 0 \nonumber
\end{align}
for some pair of objects $T\in\mathcal{T}$ and $F\in\mathcal{F}$.
\end{enumerate} 
\end{defn}  

The objects of $\mathcal{T}$ are called \textit{torsion} and the objects of $\mathcal{F}$ are called \textit{torsion-free}. We have the following

\begin{prop} ~\cite{20}
Let $\mathcal{A}$ be the heart of a bounded t-structure on a triangulated category $\mathcal{D}$. Let $H^{i}(E)\in\mathcal{A}$ be the i-th cohomology object of E with respect to this t-structure. Let $(\mathcal{T}, \mathcal{F})$ be a torsion pair in $\mathcal{A}$. Then the full subcategory
\begin{align}
\mathcal{A}^{*}&=\left\langle\mathcal{F},\mathcal{T}[-1] \right\rangle \nonumber \\
&=\left\{E\in \mathcal{D}| H^{i}(E)=0 \text{ for } i\notin \lbrace0,1\rbrace, H^{0}(E)\in\mathcal{F}, H^{1}(E)\in \mathcal{T}\right\} \nonumber
\end{align}
is the heart of a bounded t-structure on $\mathcal{D}$. 
\end{prop}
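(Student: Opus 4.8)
The statement is the tilting construction of Happel--Reiten--Smal\o{}, and the plan is to exhibit explicitly the bounded t-structure whose heart is $\mathcal{A}^{*}$ and then verify the two axioms of Definition 2.1 together with boundedness. Throughout I write $H^{i}$ for the cohomology functors and $\tau_{\le n},\tau_{\ge n}$ for the truncation functors of the given (bounded) t-structure on $\mathcal{D}$, recalling that for every $E$ there is a functorial triangle $\tau_{\le n}E\to E\to\tau_{\ge n+1}E\to\tau_{\le n}E[1]$ with $H^{i}(\tau_{\le n}E)=0$ for $i>n$ and $H^{i}(\tau_{\ge n+1}E)=0$ for $i\le n$. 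As a candidate aisle I take
$$\mathcal{G}=\{E\in\mathcal{D} : H^{i}(E)=0 \text{ for } i\ge 2,\ H^{1}(E)\in\mathcal{T}\},$$
so that the intended co-aisle is $\{E : H^{i}(E)=0 \text{ for } i\le 0,\ H^{1}(E)\in\mathcal{F}\}$; the target is to show $\mathcal{G}$ defines a bounded t-structure with $\mathcal{G}\cap\mathcal{G}^{\perp}[1]=\mathcal{A}^{*}$.

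First I would check axiom (1), namely $\mathcal{G}[1]\subset\mathcal{G}$: this is immediate from the cohomological description, since shifting turns the condition $H^{\ge 2}=0$ into $H^{\ge 1}=0$ and sends $H^{1}$ to $H^{2}=0\in\mathcal{T}$, using only that the zero object lies in $\mathcal{T}$. Next I would record the orthogonality $\mathrm{Hom}(\mathcal{G},\mathcal{G}^{\perp})=0$: filtering both arguments by their cohomology objects through the truncation triangles, the total Hom is controlled by the groups $\mathrm{Ext}^{i-j}_{\mathcal{A}}$ between cohomology objects sitting in degrees $i\le 1$ and $j\ge 1$; all negative Ext-groups in the heart vanish, and the single overlap in degree $1$ is $\mathrm{Hom}_{\mathcal{A}}(\mathcal{T},\mathcal{F})=0$ by the first torsion-pair axiom. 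This will identify $\mathcal{G}^{\perp}$ with the displayed co-aisle once the decomposition triangles are shown to exist.

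The crux is axiom (2): for each $E\in\mathcal{D}$ I must produce a triangle $A\to E\to B$ with $A\in\mathcal{G}$ and $B\in\mathcal{G}^{\perp}$. The two t-structures differ only in cohomological degree $1$, where the torsion pair intervenes, so the plan is to begin from the old truncation $\tau_{\le 1}E\to E\to\tau_{\ge 2}E$ and refine the degree-$1$ layer. Applying the torsion-pair axiom to $H^{1}(E)\in\mathcal{A}$ gives a short exact sequence $0\to T\to H^{1}(E)\to F\to 0$ with $T\in\mathcal{T}$ and $F\in\mathcal{F}$; taking $A$ to be the fibre of the composite $\tau_{\le 1}E\to H^{1}(E)[-1]\to F[-1]$ produces an object with $H^{i}(A)=H^{i}(E)$ for $i\le 0$ and $H^{1}(A)=T$, hence $A\in\mathcal{G}$. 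The octahedron on $A\to\tau_{\le 1}E\to E$ then yields a triangle $F[-1]\to B\to\tau_{\ge 2}E\to F$ for the cofibre $B$ of $A\to E$, so $H^{1}(B)=F$, $H^{i}(B)=H^{i}(E)$ for $i\ge 2$ and $H^{i}(B)=0$ for $i\le 0$, giving $B\in\mathcal{G}^{\perp}$. I expect bookkeeping of the connecting maps in this octahedron, so that $A$ and $B$ land in the correct cohomological ranges, to be the one genuinely technical point.

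Finally, from the decomposition triangle together with the orthogonality I would conclude that $\mathcal{G}^{\perp}$ is exactly the displayed co-aisle and that $\mathcal{G}\cap\mathcal{G}^{\perp}[1]=\mathcal{A}^{*}$, a direct cohomological comparison with the two descriptions of $\mathcal{A}^{*}$ in the statement. Boundedness of $\mathcal{G}$ is then inherited from the given t-structure: every $E$ has $H^{i}(E)=0$ outside a finite range, and the degree bounds governing $\mathcal{G}[i]\cap\mathcal{G}^{\perp}[j]$ coincide with the old ones up to the single-degree refinement, so $\mathcal{D}=\bigcup_{i,j}\mathcal{G}[i]\cap\mathcal{G}^{\perp}[j]$ follows from the corresponding statement for the t-structure of $\mathcal{A}$.
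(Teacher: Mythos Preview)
The paper does not give a proof of this proposition at all; it is simply quoted from Happel--Reiten--Smal\o{} (reference~\cite{20}) and then used as a black box. Your sketch is therefore not competing with any argument in the paper, and what you have written is essentially the standard proof of the HRS tilting construction: define the new aisle $\mathcal{G}$ cohomologically, check $\mathcal{G}[1]\subset\mathcal{G}$, build the decomposition triangle by splicing the old truncation $\tau_{\le 1}E\to E\to\tau_{\ge 2}E$ with the torsion decomposition of $H^{1}(E)$ via the octahedral axiom, and read off boundedness from that of the original t-structure. The cohomological bookkeeping you outline (in particular $H^{1}(A)=T$, $H^{2}(A)=0$ from surjectivity of $H^{1}(E)\to F$, and the identification of the cofibre $B$) is correct, and the Hom-vanishing argument reducing to $\mathrm{Hom}_{\mathcal{A}}(\mathcal{T},\mathcal{F})=0$ is the right one. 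One small point worth tightening if you write this out in full: the orthogonality step is cleanest proved \emph{after} the decomposition triangle is in hand (so that $\mathcal{G}^{\perp}$ is already known to coincide with your displayed co-aisle), rather than via an informal spectral-sequence style reduction to $\mathrm{Ext}^{i-j}_{\mathcal{A}}$; but either order works.
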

We say $\mathcal{A}^{*}$ is obtained from $\mathcal{A}$ by \textit{(left) tilting} with respect to the torsion pair $(\mathcal{T}, \mathcal{F})$.\\ 

The following lemma of Bridgeland gives a composition of left tilts.

\begin{lem}\cite{190}
\label{nagaobridgeland}
Let $(\mathcal{T}, \mathcal{F})$ be a torsion pair in $\mathcal{A}$ and $(\mathcal{T}', \mathcal{F}')$ a torsion pair in $\mathcal{A}^{*}=\left\langle\mathcal{F},\mathcal{T}[-1] \right\rangle$. If $\mathcal{T}'\subset \mathcal{F}$, then the left tilt $\mathcal{A}^{**}=\left\langle\mathcal{F}',\mathcal{T}'[-1] \right\rangle$ of $\mathcal{A}^{*}$ equals a left tilt of $\mathcal{A}$.  
\end{lem}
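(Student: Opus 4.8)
The plan is to deduce the statement from the converse of the Proposition above: a heart $\mathcal{B}$ of a bounded t-structure on $\mathcal{D}$ is a left tilt of $\mathcal{A}$ if and only if it is an \emph{intermediate} heart for $\mathcal{A}$, meaning $\mathcal{A}[-1]\leq\mathcal{B}\leq\mathcal{A}$, equivalently that every object $E\in\mathcal{B}$ satisfies $H^{i}(E)=0$ for $i\notin\{0,1\}$ (cohomology taken with respect to the t-structure of $\mathcal{A}$). Granting this characterization, the torsion pair realizing $\mathcal{B}$ is recovered as $\mathcal{F}''=\mathcal{A}\cap\mathcal{B}$ and $\mathcal{T}''=\mathcal{A}\cap\mathcal{B}[1]$. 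Thus the whole problem reduces to bounding the $\mathcal{A}$-cohomology of the objects of $\mathcal{A}^{**}$, and it is here that the hypothesis $\mathcal{T}'\subset\mathcal{F}$ must enter.

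First I would record the easy inclusions. Since $\mathcal{A}^{*}$ is a left tilt of $\mathcal{A}$ we have $\mathcal{A}^{*}\leq\mathcal{A}$, and since $\mathcal{A}^{**}$ is a left tilt of $\mathcal{A}^{*}$ we have $\mathcal{A}^{**}\leq\mathcal{A}^{*}$; as $\leq$ is transitive this gives $\mathcal{A}^{**}\leq\mathcal{A}$, so no object of $\mathcal{A}^{**}$ has cohomology in negative degrees. The only remaining point is to show that no object of $\mathcal{A}^{**}$ has cohomology in degree $\geq 2$. For this I would take $E\in\mathcal{A}^{**}=\langle\mathcal{F}',\mathcal{T}'[-1]\rangle$ and use the defining triangle $F'\to E\to T'[-1]\to F'[1]$ with $F'\in\mathcal{F}'$ and $T'\in\mathcal{T}'$. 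Now $F'\in\mathcal{A}^{*}$, so by the Proposition its $\mathcal{A}$-cohomology is concentrated in degrees $\{0,1\}$. The hypothesis gives $T'\in\mathcal{T}'\subset\mathcal{F}\subset\mathcal{A}$, so $T'$ has $\mathcal{A}$-cohomology only in degree $0$ and hence $T'[-1]$ only in degree $1$. Feeding this into the long exact sequence of $\mathcal{A}$-cohomology of the triangle shows $H^{i}(E)=0$ for $i\notin\{0,1\}$. This is exactly where the hypothesis is indispensable: without $\mathcal{T}'\subset\mathcal{F}$ the object $T'$ would only be known to lie in $\mathcal{A}^{*}$, its cohomology could occupy degrees $\{0,1\}$, and $T'[-1]$ could then contribute in degree $2$, breaking the bound.

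With the cohomological bound in hand, $\mathcal{A}^{**}$ is an intermediate heart for $\mathcal{A}$, and the characterization quoted in the first paragraph identifies it as the left tilt of $\mathcal{A}$ at the torsion pair $(\mathcal{T}'',\mathcal{F}'')=(\mathcal{A}\cap\mathcal{A}^{**}[1],\,\mathcal{A}\cap\mathcal{A}^{**})$, which is what the statement asserts. I expect the main obstacle to be the justification of this characterization itself, i.e. checking that an intermediate heart genuinely arises from a torsion pair: one must verify that the pair $(\mathcal{T}'',\mathcal{F}'')$ satisfies both axioms of a torsion pair in $\mathcal{A}$ — the vanishing of $\mathrm{Hom}$ is formal, but producing, for each $A\in\mathcal{A}$, the short exact sequence with torsion part in $\mathcal{T}''$ and torsion-free part in $\mathcal{F}''$ requires extracting it from the $\mathcal{A}^{**}$-truncation triangle of $A$ and checking the pieces land in the correct degrees — and that the resulting tilt $\langle\mathcal{F}'',\mathcal{T}''[-1]\rangle$ reproduces $\mathcal{A}^{**}$ exactly. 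This is the converse direction of the Happel--Reiten--Smal\o{} correspondence of \cite{20} and can either be cited or carried out by the truncation-triangle argument just indicated.
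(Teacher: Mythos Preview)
Your argument is correct and is the standard one: bound the $\mathcal{A}$-cohomology of objects in $\mathcal{A}^{**}$ using the hypothesis $\mathcal{T}'\subset\mathcal{F}$, then invoke the converse of the Happel--Reiten--Smal\o{} correspondence to conclude that $\mathcal{A}^{**}$ arises as a left tilt of $\mathcal{A}$. The paper itself does not give a proof of this lemma at all; it simply cites \cite{190} (Nagao), so there is no in-paper argument to compare against. Your approach is exactly the one that underlies the cited result, and your identification of the one non-trivial ingredient---that an intermediate heart is automatically a tilt, i.e.\ the converse direction of \cite{20}---is accurate and appropriately flagged.
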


Note that if $\mathcal{A}^{*}=\left\langle\mathcal{F},\mathcal{T}[-1] \right\rangle$ is the left tilt of $\mathcal{A}$ with respect to a torsion pair $(\mathcal{T}, \mathcal{F})$, then the simple objects of $\mathcal{A}^{*}$ lie in $\mathcal{A}$ or in $\mathcal{A}[-1]$: We have a short exact sequence in $\mathcal{A}^{*}$ for every object $S\in\mathcal{A}^{*}$ $$0\longrightarrow E\longrightarrow S\longrightarrow F\longrightarrow 0$$ with $E\in\mathcal{F}\subset \mathcal{A}$ and $F\in\mathcal{T}[-1]\subset\mathcal{A}[-1]$. If $S$ is simple we have $S\cong E$ or $S\cong F$.\\ 

Suppose $\mathcal{A}\subset\mathcal{D}$ is the heart of a bounded t-structure on $\mathcal{D}$ and of finite length. Given a simple $S\in\mathcal{A}$ we denote by $\left\langle S\right\rangle$ the full subcategory of objects $E\in\mathcal{A}$ whose simple factors are isomorphic to $S$. We can view $\left\langle S\right\rangle$ as the torsion part of a torsion pair on $\mathcal{A}$ with torsion-free part $$\mathcal{F}=\left\{E\in\mathcal{A}|Hom_{\mathcal{A}}(S,E)=0\right\}$$ or as a torsion-free part with torsion part $$\mathcal{T}=\left\{E\in\mathcal{A}|Hom_{\mathcal{A}}(E,S)=0\right\}.$$

The new hearts after tilting are
\begin{align}
L_{S}(\mathcal{A})&=\left\{E\in \mathcal{D}| H^{i}(E)=0 \text{ for } i\notin \lbrace0,1\rbrace, H^{0}(E)\in\mathcal{F}, H^{1}(E)\in \left\langle S\right\rangle\right\}, \nonumber \\
R_{S}(\mathcal{A})&=\left\{E\in \mathcal{D}| H^{i}(E)=0 \text{ for } i\notin \lbrace-1,0\rbrace, H^{-1}(E)\in\left\langle S\right\rangle, H^{0}(E)\in\mathcal{T} \right\}. \nonumber
\end{align}
$L_{S}(\mathcal{A})$ (respectively $R_{S}(\mathcal{A})$) is called \textit{the left} (respectively \textit{the right}) \textit{tilt of $\mathcal{A}$ at the simple $S$}. $S[-1]$ is a simple object in $L_{S}(\mathcal{A})$ and if this heart is again of finite length we have $R_{S[-1]}L_{S}(\mathcal{A})=\mathcal{A}$. Similarly, if $R_{S}(\mathcal{A})$ has finite length, we have $L_{S[1]}R_{S}(\mathcal{A})=\mathcal{A}$.

\section{Stability conditions on triangulated categories}

We review stability conditions on a triangulated category $\mathcal{D}$ introduced by Bridgeland in \cite{5}. We denote by $K(\mathcal{D})$ the corresponding Grothendieck group of $\mathcal{D}$.

\begin{defn}~\cite{5}
\label{bridgeland}
A \textit{stability condition} on a triangulated category $\mathcal{D}$ consists of a group homomorphism $Z:K(\mathcal{D})\rightarrow \mathbb{C}$ called the \textit{central charge} and of full additive subcategories $\mathcal{P}(\phi)\subset\mathcal{D}$ for each $\phi\in \mathbb{R}$, satisfying the following axioms:
\begin{enumerate}
\item if $0\neq E\in\mathcal{P}(\phi)$, then $Z(E)=m(E)exp(i\pi\phi)$ for some $m(E)\in \mathbb{R}_{>0}$;
\item $\forall \phi\in\mathbb{R}, \mathcal{P}(\phi+1)=\mathcal{P}(\phi)\left[1\right]$;
\item if $\phi_{1}>\phi_{2}$ and $A_{j}\in\mathcal{P}(\phi_{j})$, then $Hom_{\mathcal{D}}(A_{1},A_{2})=0;$
\item for $0\neq E\in\mathcal{D}$, there is a finite sequence of real numbers $\phi_{1}>\cdots>\phi_{n}$ and a collection of triangles 

$$0=\xymatrixcolsep{7mm}\xymatrix{E_{0}\ar[rr] && E_{1}\ar[ld]\ar[rr] && E_{2}\ar[ld]\\ \
			& A_{1}\ar @{-->}[lu]			& & A_{2}\ar @{-->}[lu]}\rightarrow\cdots\rightarrow\xymatrixcolsep{7mm}\xymatrix{ E_{n-1}\ar[rr] && E_{n}\ar[ld]\\			
			 & A_{n}\ar @{-->}[lu]}=E$$
       
with $A_{j}\in\mathcal{P}(\phi_{j})$ for all j. 
\end{enumerate}
\end{defn} 

We recall some results of \cite{5}. The subcategory $\mathcal{P}(\phi)$ is Abelian and its nonzero objects are said to be \textit{semistable} of phase $\phi$ for a stability condition $\sigma=(Z,\mathcal{P})$. We call its simple objects \textit{stable}. The objects $A_{i}$ in Definition $\ref{bridgeland}$ are called semistable factors of E with respect to $\sigma$. For any interval $I\subset\mathbb{R}$ we define $\mathcal{P}(I)$ to be the extension-closed subcategory of $\mathcal{D}$ generated by the subcategories $\mathcal{P}(\phi)$ for $\phi\in I$.\\

A stability condition is \textit{locally-finite} if there exists some $\epsilon>0$ such that for all $\phi\in\mathbb{R}$ each quasi-Abelian subcategory $\mathcal{P}((\phi-\epsilon,\phi+\epsilon))$ is of finite length. We denote by $Stab(\mathcal{D})$ the set of locally-finite stability conditions. It is a topological space.\\

A \textit{central charge} (or \textit{stability function}) on an Abelian category $\mathcal{A}$ is a group homomorphism $Z:K(\mathcal{A})\rightarrow\mathbb{C}$ such that for any nonzero $E\in\mathcal{A}$, $Z(E)$ lies in the upper halfplane
\begin{align}
\label{halfplane}
\mathbb{H}:=\left\{r\cdot exp(i\pi\phi)| 0<\phi\leq 1,r\in\mathbb{R}_{>0}\right\}\subset\mathbb{C}.
\end{align}

Every object $E\in\mathcal{A}$ has a phase $0<\phi(E)\leq 1$ such that $Z(E)=r\cdot exp(i\pi\phi(E))$ with $r\in\mathbb{R}_{>0}$. We say a nonzero object $E\in\mathcal{A}$ is \textit{semistable} (respectively \textit{stable}) with respect to the central charge $Z$ if every subobject $0\neq A\subset E$ satisfies $\phi(A)\leq \phi(E)$ (respectively $\phi(A)< \phi(E)$). The central charge $Z$ has the Harder-Narasimhan property if every nonzero object $E\in \mathcal{A}$ has a finite filtration $$0=E_{0}\subset E_{1}\subset \ldots \subset E_{n-1}\subset E_{n}=E$$ where the semistable factors $F_{j}=E_{j}/E_{j-1}$ fulfill $$\phi(F_{1})>\phi(F_{2})>\ldots >\phi(F_{n}).$$
 
\begin{prop}~\cite{5}
To give a stability condition on a triangulated category $\mathcal{D}$ is equivalent to giving a bounded t-structure on $\mathcal{D}$ and a central charge on its heart which has the Harder-Narasimhan property.
\end{prop}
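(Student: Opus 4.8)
The plan is to establish the equivalence by constructing the two directions of the correspondence explicitly and then verifying the axioms match up.
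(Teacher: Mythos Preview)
Your proposal is not a proof; it is only a declaration of intent. Saying you will ``construct the two directions and verify the axioms'' contains no mathematical content: you have not identified what the heart is in terms of the slicing, nor how the slicing is recovered from a heart and central charge. These are precisely the ideas that need to be supplied.

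The paper's proof, though brief, gives both constructions explicitly. From a heart $\mathcal{A}$ with central charge $Z$ having the HN property, one defines $\mathcal{P}(\phi)$ for $\phi\in(0,1]$ to be the semistable objects of $\mathcal{A}$ of phase $\phi$, and extends by the rule $\mathcal{P}(\phi+1)=\mathcal{P}(\phi)[1]$. Conversely, from a stability condition $(Z,\mathcal{P})$ one takes $\mathcal{A}=\mathcal{P}((0,1])$ as the heart of a bounded t-structure, and the same $Z$ restricted to $K(\mathcal{A})\cong K(\mathcal{D})$ gives the central charge; the semistables of $\mathcal{A}$ for $Z$ are then exactly the objects of the $\mathcal{P}(\phi)$ for $\phi\in(0,1]$. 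Your proposal should at minimum name these constructions, since without them there is nothing to verify.
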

\begin{proof}Given a heart $\mathcal{A}$ of a bounded t-structure on $\mathcal{D}$ and a central charge with HN property we define the subcategories $\mathcal{P}(\phi)$ to be the semistable objects of $\mathcal{A}$ of phase $\phi\in(0,1]$ and continue by the rule $\mathcal{P}(\phi+1)=\mathcal{P}(\phi)[1]$.\\ Conversely, given a stability condition $\sigma=(Z,\mathcal{P})$ on a triangulated category $\mathcal{D}$ the full subcategory $\mathcal{A}=\mathcal{P}((0,1])$ is the heart of a bounded t-structure on $\mathcal{D}$. Identifying the Grothendieck groups $K(\mathcal{A})$ and $K(\mathcal{D})$ the central charge $Z:K(\mathcal{D})\rightarrow\mathbb{C}$ defines a central charge on $\mathcal{A}$. The semistable objects of the categories $\mathcal{P}(\phi)$ are the semistable objects of $\mathcal{A}$ with respect to this central charge.
\end{proof}   

Let $\mathcal{A}\subset\mathcal{D}$ be the heart of a bounded t-structure on a triangulated category $\mathcal{D}$. We further assume that $\mathcal{A}$ is of finite length with finitely many simple objects $S_{1},\ldots,S_{n}$. Then the subset $U(\mathcal{A})$ of $Stab(\mathcal{D})$ consisting of stability conditions with heart $\mathcal{A}$ is isomorphic to $\mathbb{H}^{n}$.\\

We are interested in the case of a simple object of $\mathcal{A}$ leaving the upper halfplane. We have the following crucial result:

\begin{prop}
\label{lemma} 
(\cite{10}, Lemma 5.5) Let $\mathcal{A}\subset \mathcal{D}$ be the heart of a bounded t-structure on $\mathcal{D}$ and suppose $\mathcal{A}$ has finite length with finitely many simple objects. Then the codimension one subset of $U(\mathcal{A})$ where the simple $S$ has phase $1$ and all other simples in $\mathcal{A}$ have phases in $(0,1)$ is the intersection $U(\mathcal{A})\cap \overline{U(\mathcal{B})}$ precisely if $\mathcal{B}=L_{S}(\mathcal{A})$.                   
\end{prop}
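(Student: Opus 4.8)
The plan is to work locally in $Stab(\mathcal{D})$ near a point of the proposed wall, using Bridgeland's local homeomorphism $\pi\colon Stab(\mathcal{D})\to \mathrm{Hom}(K(\mathcal{D}),\mathbb{C})$, $\sigma\mapsto Z$, together with the coordinates $U(\mathcal{A})\cong\mathbb{H}^{n}$ given by $\sigma\mapsto(Z(S_{1}),\dots,Z(S_{n}))$ \cite{5}. Write $S=S_{n}$ and let $W\subset U(\mathcal{A})$ be the locus where $Z(S)\in\mathbb{R}_{<0}$ and $Z(S_{i})$ lies in the open upper half-plane for $i\neq n$; since the phase-$1$ ray belongs to $\mathbb{H}$, the set $W$ is a real-codimension-one subset of $U(\mathcal{A})$. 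I will first prove $W\subseteq U(\mathcal{A})\cap\overline{U(L_{S}(\mathcal{A}))}$, then that the intersection is no larger, and finally that no heart other than $L_{S}(\mathcal{A})$ can produce $W$.

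The key computation is that at $\sigma_{0}\in W$ the slicing satisfies $\mathcal{P}_{\sigma_{0}}(1)=\langle S\rangle$ and $\mathcal{P}_{\sigma_{0}}((0,1))=\mathcal{F}=\{E\in\mathcal{A}\mid \mathrm{Hom}(S,E)=0\}$. The first equality holds because $S$ is the unique simple of phase $1$, so an object is semistable of phase $1$ iff all its Jordan--Hölder factors are $S$. For the second I will check both inclusions: an object whose Harder--Narasimhan factors all have phase $<1$ admits no nonzero map from $S$ by axiom (3), while conversely $\mathrm{Hom}(S,E)=0$ forces $E$ to have no phase-$1$ subobject, so its maximal destabilizing subobject, and hence every Harder--Narasimhan factor, has phase $<1$. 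Consequently the tilting torsion pair $(\langle S\rangle,\mathcal{F})$ coincides with $(\mathcal{P}_{\sigma_{0}}(1),\mathcal{P}_{\sigma_{0}}((0,1)))$, and $L_{S}(\mathcal{A})=\langle\mathcal{F},\langle S\rangle[-1]\rangle=\mathcal{P}_{\sigma_{0}}([0,1))$.

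To obtain $W\subseteq\overline{U(L_{S}(\mathcal{A}))}$ I will deform $\sigma_{0}$: keep each $Z(S_{i})$ fixed in the open upper half-plane and push $Z(S)=Z_{0}(S)-i\delta$ just below the real axis for small $\delta>0$. The pair $(Z,L_{S}(\mathcal{A}))$ is then a genuine stability condition, since on the simple $S[-1]$, with $Z(S[-1])=|Z_{0}(S)|+i\delta$, and on the remaining simples of $L_{S}(\mathcal{A})$, which lie in $\mathcal{F}=\mathcal{P}_{\sigma_{0}}((0,1))$, the central charge takes values in the open upper half-plane once $\delta$ is small; thus $Z$ is a stability function on the finite-length heart $L_{S}(\mathcal{A})$, and the finite-length hypothesis supplies the Harder--Narasimhan property. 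As $\delta\to0$ these stability conditions converge to $\sigma_{0}$ in the metric on $Stab(\mathcal{D})$, because the only phases that move are those of objects built from $S$ and $\phi(S[-1])\to0$ matches the phase of $S[-1]\in\mathcal{P}_{\sigma_{0}}(0)$; hence $\sigma_{0}\in\overline{U(L_{S}(\mathcal{A}))}$.

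Finally, for equality and the ``precisely if'' I will invoke the local homeomorphism once more: near any $\sigma_{0}\in W$ the only event under perturbation is $S$ crossing the real axis, so exactly two chambers meet along $W$, namely $U(\mathcal{A})$ on the side $\mathrm{Im}\,Z(S)\ge0$ and $U(L_{S}(\mathcal{A}))$ on the side $\mathrm{Im}\,Z(S)<0$. This gives $U(\mathcal{A})\cap\overline{U(L_{S}(\mathcal{A}))}\subseteq W$ and shows that any heart $\mathcal{B}$ with $\sigma_{0}\in\overline{U(\mathcal{B})}$ equals $\mathcal{A}$ or $L_{S}(\mathcal{A})$; since $U(\mathcal{A})\cap\overline{U(\mathcal{A})}=U(\mathcal{A})\neq W$, the equation $U(\mathcal{A})\cap\overline{U(\mathcal{B})}=W$ holds exactly when $\mathcal{B}=L_{S}(\mathcal{A})$. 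I expect the main obstacle to be the identification $\mathcal{P}_{\sigma_{0}}((0,1))=\mathcal{F}$ combined with the convergence argument in the stability metric, since these are precisely what rigidly match the combinatorial tilt $L_{S}(\mathcal{A})$ to the analytic wall-crossing; the two-chamber local model, by contrast, follows routinely from the local homeomorphism once only $S$ sits at phase $1$.
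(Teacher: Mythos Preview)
The paper does not supply its own proof of this proposition; it is quoted verbatim from Bridgeland's survey \cite{10}, Lemma~5.5, and the sentence ``In the proof of Prop.~\ref{lemma} we use a version of\ldots'' in Section~4 is a slip for Prop.~\ref{prop}. So there is nothing in the paper to compare against directly, and your argument should be judged against Bridgeland's original.

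Your outline is the standard one and matches Bridgeland's: identify $\mathcal{P}_{\sigma_{0}}(1)=\langle S\rangle$ and $\mathcal{P}_{\sigma_{0}}((0,1))=\mathcal{F}$, recognise $L_{S}(\mathcal{A})=\mathcal{P}_{\sigma_{0}}([0,1))$, deform across the wall, and use the local homeomorphism $\pi$ to see that only two chambers meet there. The identification $\mathcal{P}_{\sigma_{0}}((0,1))=\mathcal{F}$ and the two-chamber argument are fine.

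There is, however, a genuine gap in your deformation step. You write that $Z$ is ``a stability function on the finite-length heart $L_{S}(\mathcal{A})$, and the finite-length hypothesis supplies the Harder--Narasimhan property''. But the proposition only assumes $\mathcal{A}$ is finite length; the paper explicitly remarks immediately afterwards that ``the tilted subcategory $L_{S}(\mathcal{A})$ need not to have finite length''. Without that, you cannot check $Z$ on finitely many simples, nor invoke finite length for Harder--Narasimhan. Concretely, an object $F\in\mathcal{F}$ may contain arbitrarily many Jordan--H\"older copies of $S$ relative to the other simples, so pushing $Z(S)$ below the real axis by a fixed $\delta>0$ can send $Z(F)$ out of $\mathbb{H}$; there is no uniform $\delta$ that works for all of $\mathcal{F}$ in general. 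The clean repair, which is what Bridgeland does, is not to build a stability function on $L_{S}(\mathcal{A})$ by hand but to use the deformation theorem: the local homeomorphism $\pi$ already furnishes a stability condition $\sigma_{\delta}$ for each nearby central charge, and one then identifies its heart with $\mathcal{P}_{\sigma_{0}}([0,1))=L_{S}(\mathcal{A})$ via continuity of the slicing (equivalently, act by the $\mathbb{C}$-flow on $Stab(\mathcal{D})$). This sidesteps any finiteness assumption on $L_{S}(\mathcal{A})$.
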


The tilted subcategory $L_{S}(\mathcal{A})$ need not to have finite length. If we can tilt at a simple of the new heart again the corresponding regions in $Stab(\mathcal{D})$ can be glued together at their codimension one boundaries and so on.

\section{Mutation method}

In this section $\mathcal{A}\subset\mathcal{D}$ is the heart of a bounded t-structure of $\mathcal{D}$ such that we can tilt indefinitely, i.e. $\mathcal{A}$ is of finite length with finitely many simple objects and every heart obtained from $\mathcal{A}$ by a sequence of simple tilts is again of finite length. We will show that a discrete central charge induces a sequence of tilts from the heart $\mathcal{A}$ to the heart $\mathcal{A}[-1]$. 

\begin{defn} We call a central charge $Z:K(\mathcal{A})\rightarrow\mathbb{C}$ \textit{discrete} if two stable objects of $\mathcal{A}$ have the same phase precisely if they are isomorphic. 
\end{defn}

Let us consider the $n$ simple objects $S_{1},\ldots, S_{n}$ of $\mathcal{A}$. For a discrete central charge there must be a simple $S_{i}$ that is left-most, i.e. whose phase is the biggest. We identify the Grothendieck-groups $K(\mathcal{A})=K(\mathcal{D})$ and rotate the complex numbers $Z(S_{1}),\ldots, Z(S_{n})$ a bit counterclockwise, such that the left-most simple $S_{i}$ just leaves the upper halfplane. Then we tilt at this simple. Prop. \ref{lemma} tells us that the corresponding stability condition in $U(\mathcal{A})$ crosses the boundary of $U(L_{S_{i}}(\mathcal{A}))$ and we end up with a stability condition in $U(L_{S_{i}}(\mathcal{A}))$. Then we rotate further and proceed with this procedure until (if possible) we accomplish a rotation by $\pi$. This algorithm describes a path through the space of stability conditions. The stable objects do not change during rotation as long as all simples stay in the upper halfplane. This procedure is inspired by the mutation method in \cite{50}.\\

In the proof of Prop. \ref{lemma} we use a version of 

\begin{lem}\label{primitive} Let $\mathcal{D}$ be a triangulated category such that $K(\mathcal{D})$ is a finite-dimensional lattice and let $Stab^{*}(\mathcal{D})\subset Stab(\mathcal{D})$ be a full connected component. Let us assume that the object $E\in\mathcal{D}$ has primitive class in $K(\mathcal{D})$. If $E$ is stable in a stability condition $\sigma\in Stab^{*}(\mathcal{D})$, then it is stable in a neighborhood of $\sigma$.                   
\end{lem}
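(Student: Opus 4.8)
\medskip

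\noindent\emph{Proof strategy.} The plan is to argue by contradiction inside the topology of $Stab(\mathcal{D})$ from \cite{5}, first localising the problem to a single finite-length quasi-Abelian category and then separating $\sigma$ from finitely many ``walls''. Write $\phi:=\phi_{\sigma}(E)$, so that $E\in\mathcal{P}_{\sigma}(\phi)$ is a simple object of this Abelian category. Suppose the statement fails: there is a sequence $\sigma_{k}\to\sigma$ in $Stab^{*}(\mathcal{D})$ with $E$ not $\sigma_{k}$-stable. For each $k$ the Harder--Narasimhan property provides a destabilising subobject $A_{k}$ of $E$ (the maximal destabilising factor if $E$ is $\sigma_{k}$-unstable, or a proper subobject of equal phase if $E$ is only strictly semistable), which is $\sigma_{k}$-semistable with $\phi_{\sigma_{k}}(A_{k})\geq\phi_{\sigma_{k}}(E)$ and $0\neq A_{k}\neq E$. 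Since $E$ is $\sigma$-semistable we have $\phi^{\pm}_{\sigma_{k}}(E)\to\phi$, so by continuity of phases all $A_{k}$, for $k$ large, lie in a fixed band $\mathcal{Q}:=\mathcal{P}_{\sigma}\big((\phi-\epsilon,\phi+\epsilon)\big)$, which by local-finiteness of $\sigma$ may be taken quasi-Abelian of finite length.

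The main obstacle is to show that the classes $[A_{k}]\in K(\mathcal{D})$ take only finitely many values; once this is in hand the rest is formal. Here I would use all three hypotheses together: the masses $m_{\sigma}(A_{k})\leq e^{\epsilon}\,m_{\sigma_{k}}(E)$ are uniformly bounded (mass is subadditive over the destabilising triangle and distorted by at most $e^{\epsilon}$ under the metric of \cite{5}); the lattice $K(\mathcal{D})$ has finite rank; and $\mathcal{Q}$ has finite length, so each $A_{k}$ is built from the finitely many simple classes of $\mathcal{Q}$ that can occur in objects of bounded mass and phase-window, of which only finitely many partial sums arise. Bridgeland's deformation theorem (the local homeomorphism $\tau\mapsto Z_{\tau}$ onto an open subset of $\mathrm{Hom}(K(\mathcal{D}),\mathbb{C})$, \cite{5}) supplies the control on a fixed neighbourhood that makes this bound uniform in $k$. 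I expect this boundedness to be the genuinely delicate point, since without a support-type property bounded mass alone need not bound the class, and I would isolate it as a separate claim. Passing to a subsequence we may then assume $[A_{k}]=v$ for a fixed $v$ with $0\neq v\neq[E]$.

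Finally primitivity of $[E]$ is what makes the conclusion work. The only lattice points on the ray $\mathbb{R}_{>0}[E]$ are the integer multiples of $[E]$, so the proper subclass $v$ cannot be a rational multiple of $[E]$; hence $v$ and $[E]$ are linearly independent in $K(\mathcal{D})\otimes\mathbb{R}$ and the continuous function $f(\tau):=\Im\big(Z_{\tau}(v)\,\overline{Z_{\tau}([E])}\big)$ does not vanish identically, i.e.\ the wall $\{f=0\}$ is a proper real hypersurface rather than an entire neighbourhood (this is precisely the step that fails for non-primitive, proportional classes). The sign of $f$ records whether $\phi_{\tau}(v)$ lies above or below $\phi_{\tau}(E)$, so $\phi_{\sigma_{k}}(A_{k})\geq\phi_{\sigma_{k}}(E)$ gives $f(\sigma_{k})\geq0$, whence $f(\sigma)\geq0$ in the limit. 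To reach a contradiction I would show that stability of $E$ at $\sigma$ forces the strict inequality $f(\sigma)<0$: were $\phi_{\sigma}(v)\geq\phi$, the first $\sigma$-semistable factor of an object of class $v$ would be a nonzero proper subobject of $E$ of phase $\geq\phi$, hence a proper subobject of the simple object $E\in\mathcal{P}_{\sigma}(\phi)$, which is impossible unless $v$ is proportional to $[E]$ --- exactly the degeneracy excluded by primitivity. Thus $f(\sigma_{k})\to f(\sigma)<0$ contradicts $f(\sigma_{k})\geq0$, and $E$ must be stable in a neighbourhood of $\sigma$. \hfill$\square$
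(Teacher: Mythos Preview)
The paper does not give a self-contained proof: it simply writes ``This follows from the arguments of \cite{30}, section~9, see \cite{40}.'' Your sketch is precisely an attempt to unpack that wall-and-chamber argument of Bridgeland (bounded mass $\Rightarrow$ finitely many destabilising classes $\Rightarrow$ finitely many real-codimension-one walls, none of which can be the whole neighbourhood because primitivity of $[E]$ rules out proportional classes). So the overall approach coincides with what the paper invokes.

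There is, however, a genuine gap in your final paragraph. From $f(\sigma)\geq 0$, i.e.\ $\phi_{\sigma}(v)\geq\phi$, you try to manufacture a destabilising subobject of $E$ at $\sigma$ by looking at ``the first $\sigma$-semistable factor of an object of class $v$''. But an arbitrary object of class $v$ has no relation to $E$; its HN factors are not subobjects of $E$, so no contradiction with simplicity of $E$ in $\mathcal{P}_{\sigma}(\phi)$ follows. What you actually need is either (i) the statement from \cite{30}, \S9, that away from the finitely many walls the HN filtration of $E$ itself is locally constant, so that in the open chamber containing $\sigma$ the trivial filtration persists; or (ii) a limiting argument on the \emph{actual} monomorphisms $A_{k}\hookrightarrow E$ inside the fixed finite-length category $\mathcal{Q}$, producing a subobject $A\subset E$ at $\sigma$ with $[A]=v$ and $\phi_{\sigma}(A)\geq\phi$, contradicting stability. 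Either route closes the argument; the phase-of-a-class reasoning you wrote does not. Note also that $\sigma$ may well lie on a wall $\{f=0\}$ (stability at $\sigma$ does not preclude this), so the contradiction must come from an actual subobject, not merely from the numerics of $Z_{\sigma}$ on the class $v$.
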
   
\begin{proof}
This follows from the arguments of \cite{30}, section 9, see \cite{40}.  
\end{proof}

This means if we cross the real line the simple objects of $\mathcal{A}$ will remain stable in $U(L_{S_{i}}(\mathcal{A}))$ near to the boundary. A priori, two simple objects of a tilted heart could be both left-most. We exclude this possibility in Lemma \ref{lemma2} and we can therefore continue indefinitely with the mutation algorithm described above. Let us assume we rotate by finitely many steps in the mutation method. Then this is the key result:

\begin{prop}
\label{prop}
Let $\mathcal{A}\subset\mathcal{D}$ be the heart of a bounded t-structure of $\mathcal{D}$ with discrete central charge $Z:K(\mathcal{A})\rightarrow\mathbb{C}$ as described above.  The left-most simple objects of hearts appearing in the mutation method are the stable objects of $\mathcal{A}$. In the order of decreasing phase they give a sequence of simple tilts from $\mathcal{A}$ to $\mathcal{A}[-1]$. In particular, we tilt at all initial simple objects $S_{1},\ldots, S_{n}$.
\end{prop}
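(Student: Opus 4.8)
The plan is to realise the mutation method as a continuous rotation in $Stab(\mathcal{D})$ and to match each discrete simple tilt against a single wall-crossing of that path. Concretely, I would study the one-parameter family $\sigma_\theta$, $\theta\in[0,\pi]$, with central charge $e^{i\theta}Z$ and slices $\mathcal{P}_\theta(\phi)=\mathcal{P}(\phi-\theta/\pi)$, obtained from $\sigma_0=(Z,\mathcal{A})$ by the rotation action on $Stab(\mathcal{D})$. This action merely relabels phases by $\theta/\pi$, so the \emph{set of stable objects is the same for every} $\sigma_\theta$, only their phases increase. Moreover $\sigma_\pi=(-Z,\mathcal{A}[-1])$: its heart is $\mathcal{P}((-1,0])=\mathcal{A}[-1]$ and $-Z$ sends the simples $S_i[-1]$ back into $\mathbb{H}$. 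Thus the endpoints of the path are exactly $\mathcal{A}$ and $\mathcal{A}[-1]$, and the whole content of the statement is that the hearts produced by the algorithm interpolate this path, tilting precisely at the stable objects of $\mathcal{A}$.

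Next I would locate the walls. The heart $\mathcal{A}_\theta=\mathcal{P}_\theta((0,1])$ changes only at those $\theta$ for which some stable object has phase exactly $1$; a stable object $E$ of $\sigma_0$ of phase $\phi_E$ reaches phase $1$ at $\theta_E=\pi(1-\phi_E)$. By discreteness the $\phi_E$ are pairwise distinct, so the critical values $\theta_E$ are distinct and occur in order of decreasing $\phi_E$. The local point is that at $\theta=\theta_E$ the object $E$ is a \emph{simple} object of the current heart $\mathcal{A}_{\theta_E}$. I would prove this by a seesaw argument: if $0\to A\to E\to B\to 0$ is exact in $\mathcal{A}_{\theta_E}$ then, writing $W$ for the central charge of $\sigma_{\theta_E}$, we have $W(A)+W(B)=W(E)\in\mathbb{R}_{<0}$ with $W(A),W(B)\in\overline{\mathbb{H}}$, which forces $A,B\in\mathcal{P}_{\theta_E}(1)$. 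Hence $\mathcal{P}_{\theta_E}(1)$ is closed under sub- and quotient objects in $\mathcal{A}_{\theta_E}$, and as $E$ is stable (simple in $\mathcal{P}_{\theta_E}(1)$) it is simple in $\mathcal{A}_{\theta_E}$; being of phase $1$ it is the left-most simple. By Prop.~\ref{lemma} the heart then changes by the single left tilt $L_E$, while Lemma~\ref{primitive} keeps the remaining simples stable across the wall, so that $\mathcal{A}_{\theta_E+\varepsilon}=L_E(\mathcal{A}_{\theta_E})$ is again a finite-length heart and the rotation continues.

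Assembling the local steps, the hearts realised by the algorithm are exactly $\mathcal{A}=\mathcal{A}_0,\dots,\mathcal{A}_N=\mathcal{A}[-1]$ with $\mathcal{A}_{k+1}=L_{E_k}(\mathcal{A}_k)$, where $E_0,E_1,\dots$ are the stable objects of $\mathcal{A}$ listed by decreasing phase. The identification ``left-most simples $=$ stable objects of $\mathcal{A}$'' then follows in both directions: every left-most simple $E$ is stable in the corresponding $\sigma_{\theta_E}$, hence --- same stable objects under rotation --- stable in $\sigma_0$, i.e. in $\mathcal{A}$; conversely every stable object of $\mathcal{A}$ reaches phase $1$ and is tilted at, by the simplicity statement above. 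Each initial simple $S_i$ is itself stable (a simple object of the heart has no proper subobject, so semistability is vacuous and it is simple in its phase slice), so all of $S_1,\dots,S_n$ occur among the $E_k$ and we tilt at each of them; since $\mathcal{A}_N=\mathcal{A}[-1]$ these tilts form a sequence of simple tilts from $\mathcal{A}$ to $\mathcal{A}[-1]$.

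I expect the main obstacle to be the coordination of discreteness with the tilting rather than any isolated computation. One must know that at \emph{every} wall only a single simple of the already-tilted current heart reaches phase $1$, so that each crossing is one simple tilt and Prop.~\ref{lemma} applies verbatim; discreteness gives this for $\mathcal{A}$ itself, but it has to be propagated through all intermediate hearts, which is exactly the role of Lemma~\ref{lemma2}. Termination after finitely many tilts --- and hence the clean identification of the endpoint with $\mathcal{A}[-1]$ --- rests on the standing assumption that there are only finitely many stable objects, i.e. that the rotation is completed in finitely many steps.
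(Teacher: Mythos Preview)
Your proposal is correct and differs from the paper's proof in its technical core. The paper argues combinatorially: it invokes Lemma~\ref{nagaobridgeland} inductively to show that every intermediate heart is a single left tilt of $\mathcal{A}$, hence has all its simples in $\mathcal{A}\cup\mathcal{A}[-1]$; from this it reads off that the left-most simple always lies in $\mathcal{A}$ and that the terminal heart, having all simples in $\mathcal{A}[-1]$, must equal $\mathcal{A}[-1]$. To show conversely that every stable object of $\mathcal{A}$ is eventually tilted at, the paper tracks each stable $E$ through the successive tilts, using $\mathrm{Hom}(S,E)=0$ between the current left-most simple $S$ and any other stable $E$ to keep $E$ in the torsion-free part until its phase becomes maximal. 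You bypass both steps by working directly with the rotated slicing: the intermediate hearts are explicitly $\mathcal{P}((-\theta/\pi,1-\theta/\pi])$, so the endpoint $\mathcal{A}[-1]$ and the location of the walls are immediate, and your seesaw argument (forcing any sub- and quotient object of $E$ in $\mathcal{A}_{\theta_E}$ into $\mathcal{P}_{\theta_E}(1)$) shows directly that the wall object is \emph{simple} in the current heart, replacing the paper's persistence argument entirely. Your route is cleaner and makes no use of Lemma~\ref{nagaobridgeland}; the paper's route stays closer to the tilting formalism and makes the intermediate ``nearby'' structure explicit, which is convenient for Section~7. One minor remark: your appeal to Lemma~\ref{primitive} is not really needed at the point you invoke it, since the invariance of the set of stable objects under rotation is a tautology of the $\mathbb{C}$-action on $Stab(\mathcal{D})$, and the identification of the wall-crossing with $L_E$ is already Prop.~\ref{lemma}.
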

\begin{proof}
In the mutation method we always tilt at objects in $\mathcal{A}$: The first tilt is at a simple object in $\mathcal{A}$. Then the simple objects in the first tilted heart are in $\mathcal{A}$ or in $\mathcal{A}[-1]$. Since we tilt at the left-most object this implies we tilt at an object in $\mathcal{A}$. It follows from lemma \ref{nagaobridgeland} by induction that the simple objects of a tilted heart are in $\mathcal{A}$ or in $\mathcal{A}[-1]$. The final heart $\mathcal{A}'$ obtained in the mutation method contains only simple objects in $\mathcal{A}[-1]$ since we have then rotated by $\pi$. We have therefore  $\mathcal{A}'\subset\mathcal{A}[-1]$ and this implies $\mathcal{A}'=\mathcal{A}[-1]$. If simple objects of a tilted heart are in $\mathcal{A}$ one of these is left-most and we tilt at it. If all simple objects are in $\mathcal{A}[-1]$ we are in the final heart.\\

Lemma \ref{primitive} implies that all objects appearing as left-most simple objects of some heart appearing during this procedure are stable objects in $\mathcal{A}$. The phases of all stable objects in $\mathcal{A}$ are smaller than the phase of the left-most simple object $S$. By the definition of the left-tilt all stable objects except the left-most simple remain in the first tilt of $\mathcal{A}$ since there are no homomorphisms between $S$ and the other stable objects. In the first tilted heart the phases of the stable objects of $\mathcal{A}$ are equal or smaller than the new left-most simple object. If the phase of a stable object of $\mathcal{A}$ is equal to this left-most simple they are the same since we chose a discrete central charge. Otherwise the stable object remains in the next tilted heart and so on. Therefore we tilt in the mutation method at all stable objects of $\mathcal{A}$. For every central charge, we tilt at all inital simple objects $S_{1},\ldots, S_{n}$.  
\end{proof}

\begin{cor}
For every heart $\mathcal{A}'$ appearing in the mutation method we have $\mathcal{A}[-1]\leq\mathcal{A}'\leq\mathcal{A}$.
\end{cor}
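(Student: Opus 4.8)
The plan is to reduce the statement to the defining inequality for the partial order on hearts, namely that $\mathcal{H}_{1}\leq\mathcal{H}_{2}$ holds exactly when the t-structure $\mathcal{F}_{\mathcal{H}_{2}}$ of $\mathcal{H}_{2}$ is contained in the t-structure $\mathcal{F}_{\mathcal{H}_{1}}$ of $\mathcal{H}_{1}$. Writing $\mathcal{F}_{\mathcal{A}}$, $\mathcal{F}_{\mathcal{A}'}$ and $\mathcal{F}_{\mathcal{A}[-1]}$ for the t-structures associated with the three hearts, the corollary is equivalent to the chain of inclusions $\mathcal{F}_{\mathcal{A}}\subset\mathcal{F}_{\mathcal{A}'}\subset\mathcal{F}_{\mathcal{A}[-1]}$. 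The single fact I need to feed in is structural: every heart $\mathcal{A}'$ occurring in the mutation method is a \emph{single} left tilt of $\mathcal{A}$, that is, $\mathcal{A}'=\left\langle\mathcal{F},\mathcal{T}[-1]\right\rangle$ for some torsion pair $(\mathcal{T},\mathcal{F})$ in $\mathcal{A}$. This is precisely what the inductive application of Lemma \ref{nagaobridgeland} in the proof of Proposition \ref{prop} delivers, so I would invoke it here rather than reprove it.

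Granting this, I would establish both inclusions by the same short argument, using only that a t-structure is extension-closed and satisfies $\mathcal{F}[1]\subset\mathcal{F}$, together with the fact that a heart is contained in its own aisle. For $\mathcal{A}'\leq\mathcal{A}$: since $\mathcal{F}\subset\mathcal{A}'\subset\mathcal{F}_{\mathcal{A}'}$ and $\mathcal{T}[-1]\subset\mathcal{A}'\subset\mathcal{F}_{\mathcal{A}'}$, shift-stability gives $\mathcal{T}\subset\mathcal{F}_{\mathcal{A}'}$, and extension-closedness yields $\mathcal{A}=\left\langle\mathcal{T},\mathcal{F}\right\rangle\subset\mathcal{F}_{\mathcal{A}'}$; as $\mathcal{F}_{\mathcal{A}}$ is the extension closure of the $\mathcal{A}[i]$ with $i\geq 0$, this forces $\mathcal{F}_{\mathcal{A}}\subset\mathcal{F}_{\mathcal{A}'}$. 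For $\mathcal{A}[-1]\leq\mathcal{A}'$: both $\mathcal{F}\subset\mathcal{A}$ and $\mathcal{T}[-1]\subset\mathcal{A}[-1]$ lie in $\mathcal{F}_{\mathcal{A}[-1]}$, so $\mathcal{A}'=\left\langle\mathcal{F},\mathcal{T}[-1]\right\rangle\subset\mathcal{F}_{\mathcal{A}[-1]}$, and passing to the extension closure of the $\mathcal{A}'[i]$ with $i\geq 0$ gives $\mathcal{F}_{\mathcal{A}'}\subset\mathcal{F}_{\mathcal{A}[-1]}$. Combining the two inclusions proves the claim.

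The part that actually carries content is the structural reduction, and with it the verification of the hypothesis $\mathcal{T}'\subset\mathcal{F}$ of Lemma \ref{nagaobridgeland} at each stage of the induction. Here I would point out that in the mutation method one always tilts at the left-most simple, which lies in $\mathcal{A}$ rather than in $\mathcal{A}[-1]$ (exactly as already used in the proof of Proposition \ref{prop}), hence in the torsion-free part $\mathcal{F}=\mathcal{A}\cap\mathcal{A}'$ of the current tilt; since $\left\langle S\right\rangle$ consists of iterated self-extensions of $S$ and $\mathcal{F}$ is extension-closed, the torsion class $\mathcal{T}'=\left\langle S\right\rangle$ is contained in $\mathcal{F}$, as required. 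I expect this bookkeeping---tracking that the tilting simple never slips into $\mathcal{A}[-1]$---to be the only real obstacle; once it is secured, the sandwiching computation above is purely formal.
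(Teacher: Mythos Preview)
Your argument is correct and follows the same route the paper implicitly relies on: the corollary is stated without proof precisely because the proof of Proposition~\ref{prop} already establishes, via the inductive use of Lemma~\ref{nagaobridgeland}, that every intermediate heart $\mathcal{A}'$ is a single left tilt of $\mathcal{A}$, from which the sandwich $\mathcal{A}[-1]\leq\mathcal{A}'\leq\mathcal{A}$ is the standard formal consequence you spell out. Your explicit verification of the hypothesis $\mathcal{T}'=\langle S\rangle\subset\mathcal{F}$ at each step is exactly the bookkeeping the paper absorbs into the phrase ``since we tilt at the left-most object this implies we tilt at an object in $\mathcal{A}$''.
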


\begin{lem}\label{lemma2}
Let $\mathcal{A}\subset\mathcal{D}$ be the heart of a bounded t-structure of $\mathcal{D}$ with discrete central charge $Z:K(\mathcal{A})\rightarrow\mathbb{C}$ as described above. The phases of any simple objects of a heart in any step of the mutation method are distinct.      
\end{lem}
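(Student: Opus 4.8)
The plan is to track the simple objects of every heart occurring in the mutation through the original stability condition $\sigma_{0}=(Z,\mathcal{P})$, exploiting that the mutation path is nothing but the rotation orbit of $\sigma_{0}$. Write $Z_{\theta}=e^{i\pi\theta}Z$ for the central charge after a counter-clockwise rotation by $\theta\in[0,1]$. Since rotating the central charge only adds the constant $\theta$ to every argument, the slicing rotates rigidly, $\mathcal{P}_{\theta}(\phi)=\mathcal{P}(\phi-\theta)$, so the $\sigma_{\theta}$-stable objects are exactly the $\sigma_{0}$-stable ones and a fixed object $E$ has $\sigma_{\theta}$-phase $\phi_{\theta}(E)=\phi(E)+\theta$. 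The heart that is active once the rotation has reached $\theta$ is therefore $\mathcal{P}_{\theta}((0,1])=\mathcal{P}((-\theta,1-\theta])$, a window of length one.

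First I would identify the simple objects of such a heart. As the heart arises from a finite-length slicing, its simple objects are precisely the $\sigma_{0}$-stable objects whose $\sigma_{0}$-phase lies in $(-\theta,1-\theta]$. By Proposition \ref{prop} and the nesting $\mathcal{A}[-1]\leq\mathcal{A}'\leq\mathcal{A}$, each such simple lies in $\mathcal{A}$ or in $\mathcal{A}[-1]$: either it is a $Z$-stable object $E$ of $\mathcal{A}$ with $\phi(E)\in(0,1-\theta]$ (``not yet tilted''), or it is the shift $E[-1]$ of a $Z$-stable object $E$ of $\mathcal{A}$ with $\phi(E)\in(1-\theta,1]$ (``already tilted''), in which case $\phi(E[-1])=\phi(E)-1\in(-\theta,0]$. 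Since distinct simple objects of the heart are non-isomorphic, the underlying stable objects $E$ of $\mathcal{A}$ produced this way are pairwise distinct.

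Now I would compare phases, and it suffices to compare the $\sigma_{0}$-phases since the rotation shifts all of them by the same $\theta$. For two not-yet-tilted simples $E\neq F$ the phases $\phi(E)$ and $\phi(F)$ differ, because $Z$ is discrete and $E,F$ are non-isomorphic $Z$-stable objects of $\mathcal{A}$; the same applies to two already-tilted simples $E[-1],F[-1]$, whose phases $\phi(E)-1,\phi(F)-1$ differ since $\phi(E)\neq\phi(F)$. Finally, a not-yet-tilted simple has $\sigma_{0}$-phase in $(0,1-\theta]$, hence strictly positive, whereas an already-tilted simple has $\sigma_{0}$-phase in $(-\theta,0]$, hence non-positive, so these two kinds can never coincide; this last case needs no discreteness, only the length-one window. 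Hence no two simple objects of a mutation heart share a phase, which in particular yields a unique left-most simple at every step.

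The step I expect to require the most care is the first: justifying that the mutation path may be taken to be the rotation orbit of $\sigma_{0}$, so that the slicing rotates rigidly and the simple objects of every intermediate heart are genuinely $\sigma_{0}$-stable objects rather than merely $\sigma_{\theta}$-stable ones. This is exactly what Lemma \ref{primitive} together with Proposition \ref{lemma} provide: stable objects of primitive class persist under the rotation, and the wall between $U(\mathcal{A}')$ and $U(L_{S}(\mathcal{A}'))$ is crossed precisely when the left-most simple reaches phase one. Once this structural input is in place, the phase comparison is purely the bookkeeping of the two half-open intervals $(0,1-\theta]$ and $(-\theta,0]$.
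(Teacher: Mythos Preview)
Your approach is essentially the paper's: both arguments reduce to showing that every simple object of an intermediate heart is a $\sigma_{0}$-stable object of $\mathcal{A}$ (possibly shifted by $[-1]$), and then invoke discreteness of $Z$. The paper phrases this via reversibility of the path and the alternate half-plane convention, while you phrase it more directly via the rotation orbit; your explicit split into the phase intervals $(0,1-\theta]$ and $(-\theta,0]$ makes transparent a case the paper leaves implicit.

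One overstatement to correct: the claim that the simple objects of $\mathcal{P}((-\theta,1-\theta])$ are \emph{precisely} the $\sigma_{0}$-stable objects with phase in that window is false in general (already for $A_{2}$ with $\phi(S_{1})>\phi(S_{2})$ the extension $P_{1}$ is stable but not simple in $\mathcal{A}$). You only use, and only need, the inclusion ``simple $\Rightarrow$ stable'', which is correct: a simple of the heart has a one-term HN filtration and a one-term Jordan--H\"older filtration in its slice, hence is $\sigma_{\theta}$-stable, hence $\sigma_{0}$-stable since rotation does not change the set of stable objects. With ``precisely'' weakened to this inclusion, your argument goes through and matches the paper's.
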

\begin{proof}
We saw in section 2 that right tilting is inverse to left tilting and vice versa. Instead of the upper halfplane $\mathbb{H}\cup \mathbb{R}_{<0}$ we could have chosen the convention $\mathbb{H}\cup \mathbb{R}_{>0}$ in the definition of a central charge. So the objects with primitive class in $K(\mathcal{D})$ remain stable along the path described above in both directions. If two simple objects in a heart had the same phase this would mean there are two stable objects in the initial heart $\mathcal{A}$ with the same phase. Since we chose a discrete central charge, this is a contradiction.   
\end{proof}

Note if there are only finitely many stable objects in $\mathcal{A}$ for a discrete central charge the proof of Lemma \ref{lemma2} implies we rotate by finitely many steps. Indeed, in this case we have only a finite set of objects that can appear as simple objects in a tilted heart. Since there are no oriented cycles in the exchange graph of directed simple left-tilts the mutation method must terminate after finitely many steps.

\section{Quivers with (super)potential}

Let $k$ be a field. In this section we consider examples of hearts of bounded t-structures of triangulated categories such that we can tilt indefinitely. The first example is the category of representations $\mathcal{H}_{Q}:=mod-kQ$ of an acyclic quiver $Q$. $\mathcal{H}_{Q}$ is the heart of the standard t-structure on the derived category of $\mathcal{H}_{Q}$. By theorem 5.7 in \cite{70} every heart obtained from $\mathcal{H}_{Q}$ by a sequence of simple tilts is of finite length with finitely many simple objects. In the special case of a Dynkin quiver with discrete central charge Prop. \ref{prop} reads as follows:

\begin{prop}\cite{60}
Let $\mathcal{H}_{Q}$ be the category of representations of a Dynkin quiver $Q$. Then the stable representations of $\mathcal{H}_{Q}$ in the order of decreasing phase give a sequence of simple tilts from $\mathcal{H}_{Q}$ to $\mathcal{H}_{Q}[-1]$. 
\end{prop}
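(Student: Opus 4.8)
The plan is to exhibit this proposition as the special case of Proposition \ref{prop} in which the heart $\mathcal{A}$ is taken to be $\mathcal{H}_{Q}$ inside $\mathcal{D}=D^{b}(\mathcal{H}_{Q})$ equipped with the standard t-structure. All of the genuine work has already been carried out in Section 4, so the task reduces to checking that the standing hypotheses of that section are satisfied and that the mutation method terminates.

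First I would verify that $\mathcal{H}_{Q}=mod\text{-}kQ$ is the heart of a bounded t-structure of finite length with finitely many simple objects. This is the standard t-structure on the bounded derived category; its heart is Artinian because $kQ$ is finite-dimensional, and the simple objects are precisely the vertex simples $S_{1},\dots,S_{n}$. Next I would invoke Theorem 5.7 of \cite{70} to guarantee that every heart obtained from $\mathcal{H}_{Q}$ by a sequence of simple tilts is again of finite length with finitely many simple objects. This is exactly the requirement ``we can tilt indefinitely'' imposed at the start of Section 4, so the entire framework, including Proposition \ref{prop} and Lemma \ref{lemma2}, becomes available for $\mathcal{A}=\mathcal{H}_{Q}$.

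The point where the Dynkin hypothesis actually enters is the finiteness of the set of stable objects. By Gabriel's theorem a Dynkin quiver has only finitely many indecomposable representations, one for each positive root of the associated root system, so a fortiori $\mathcal{H}_{Q}$ has only finitely many stable objects for any discrete central charge $Z$. By the remark following Lemma \ref{lemma2}, finiteness of the stable objects forces the mutation method to rotate by finitely many steps: only finitely many objects can occur as simples of a tilted heart, and since the exchange graph of directed simple left-tilts carries no oriented cycles, the procedure must terminate. With both standing hypotheses and this finite-termination condition in hand, Proposition \ref{prop} applies verbatim, yielding that the left-most simples appearing in the mutation method are exactly the stable representations of $\mathcal{H}_{Q}$ and that, listed in order of decreasing phase, they give a sequence of simple tilts carrying $\mathcal{H}_{Q}$ to $\mathcal{H}_{Q}[-1]$.

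The main obstacle I anticipate is not computational but one of bookkeeping at the interface with the cited inputs: confirming that Theorem 5.7 of \cite{70} truly delivers indefinite tiltability of $\mathcal{H}_{Q}$ (and not merely finite length of $\mathcal{H}_{Q}$ itself, which would be insufficient for iterating the argument), and ensuring that the central charge can be taken discrete so that distinct stable objects have distinct phases and the ordering in the conclusion is well defined. Once these two points are secured, the proposition follows with no further argument, the heavy lifting having been absorbed into Proposition \ref{prop} and its supporting Lemmas \ref{primitive} and \ref{lemma2}.
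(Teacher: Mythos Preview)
Your proposal is correct and follows essentially the same approach as the paper: the paper presents this proposition as the direct specialization of Proposition~\ref{prop} to $\mathcal{A}=\mathcal{H}_{Q}$, having just invoked Theorem~5.7 of \cite{70} to secure indefinite tiltability for acyclic quivers. Your write-up simply makes explicit the two ingredients the paper leaves tacit---Gabriel's theorem for finiteness of stables in the Dynkin case, and the termination remark following Lemma~\ref{lemma2}---but the logical route is identical.
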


An example for a non-Dynkin quiver is the Kronecker quiver 
  $$\xymatrix{1 \ar@<+.7ex>[r]\ar@<-.7ex>[r] & 2}.  \\$$

Let us denote by $S_{1}$ and $S_{2}$ the simple representations associated with the two vertices. If the phase of $S_{2}$ is strictly greater than the phase of $S_{1}$, the simples are the only stable objects and we tilt two times to get to the heart with simples $S_{1}[-1], S_{2}[-1]$. If the phase of $S_{1}$ is strictly greater than the phase of $S_{2}$ the stable objects are precisely the representations in the $\mathbb{P}^{1}$-family with dimension vector $(1,1)$ together with the postprojective and the preinjective representations. In this case infinitely many stable objects lie on a ray in the upper halfplane.\\

In general, we can order the simple objects $S_{1},\ldots, S_{n}$ of an acyclic quiver with $n$ vertices so that $$Ext^{1}(S_{j},S_{i})=0\text{ for } 1\leq i< j\leq n.$$ By the proof of Proposition \ref{prop} we can find for any acyclic quiver a discrete central charge such that the stable objects are precisely the simple objects.\\

The mutation method in \cite{50} uses mutations of quivers with potential. An idea of Bridgeland was that mutation is modeled by tilting hearts \cite{1}. This philosophy is behind Theorem \ref{kelleryang}. We now make contact with these original ideas. 

\begin{defn}
Let $Q$ be a finite, 2-acyclic\footnote{We call a quiver $2-acyclic$ if it does not contain loops $\circlearrowleft$ or 2-cycles $\leftrightarrows$.} quiver and $r$ be a vertex of $Q$. The \textit{mutation} of $Q$ at the vertex $r$ is the new quiver $\mu_{r}(Q)$ obtained from $Q$ by the rules:
\begin{enumerate}
\item for each $i\rightarrow r \rightarrow j$ add an arrow $i\rightarrow j$,
\item reverse all arrows with source or target $r$,
\item remove a maximal set of 2-cycles.
\end{enumerate} 
\end{defn}  

In the following example
       
         $$\xymatrix{& 1 \ar[rd]\\
       2\ar[ru] && 3\ar[ll]} \hspace{2.5pc} \xymatrix{& 1 \ar[ld]\\
       2\ && 3\ar[lu]} \hspace{2.5pc}$$
       
the quivers are linked by a mutation at the vertex $1$.\\ 

The category of representations of an acyclic quiver is a special case of the category of finite-dimensional modules over the Jacobi algebra of a quiver with potential \cite{80}. Let $Q=(Q_{0},Q_{1})$ be a finite quiver with set of vertices $Q_{0}$ and set of arrows $Q_{1}$. We denote by $kQ$ its path algebra, i.e. the algebra with basis given by all paths in $Q$ and product given by composition of paths. Let $\widehat{kQ}$ be the completion of $kQ$ at the ideal generated by the arrows of $Q$. We consider the quotient of $\widehat{kQ}$ by the subspace $[\widehat{kQ},\widehat{kQ}]$ of all commutators. It has a basis given by the cyclic paths of $Q$ (up to cyclic permutation). For each arrow $a\in Q_{1}$ the cyclic derivative is the linear map from the quotient to $\widehat{kQ}$ which takes an equivalence class of a path $p$ to the sum $$\sum_{p=uav}vu$$ taken over all decompositions $p=uav$. An element $$W\in \widehat{\frac{\widehat{kQ}}{[\widehat{kQ},\widehat{kQ}]}}$$ is called a \textit{(super)potential} if it does not involve cycles of length $\leq 2$.

\begin{defn}\cite{80}
Let $(Q,W)$ be a quiver $Q$ with potential $W$. The \textit{Jacobi algebra} $\mathfrak{P}(Q,W)$ is the quotient of $\widehat{kQ}$ by the two-sided ideal generated by the cyclic derivatives $\partial_{a}W$: $$\mathfrak{P}(Q,W):=\widehat{kQ}/(\partial_{a}W, a\in Q_{1}).$$
\end{defn} 

We call a quiver with potential $(Q,W)$ \textit{Jacobi-finite} if its Jacobi algebra is finite-dimensional. We denote by $nil(\mathfrak{P}(Q,W))$ the category of finite-dimensional (right) modules over $\mathfrak{P}(Q,W)$. This is an Abelian category of finite length with simple objects the modules $S_{i}, i\in Q_{1}$. Given a quiver with potential we introduce a triangulated category following \cite{100}. This category has a canonical t-structure with heart equivalent to $nil(\mathfrak{P}(Q,W))$.\\

Let $(Q,W)$ be a quiver $Q$ with potential $W$. The \textit{Ginzburg algebra} $\Gamma(Q,W)$ \cite{90} of $(Q,W)$ is the differential graded (dg) algebra constructed as follows: Let $\tilde{Q}$ be the graded quiver\footnote{A graded quiver is a quiver where each arrow is equipped with an integer degree.} with the same vertices as $Q$ and whose arrows are
\begin{enumerate}
\item the arrows of $Q$ (they all have degree 0),
\item an arrow $a^{*}:j\rightarrow i$ of degree $-1$ for each arrow $a:i\rightarrow j$ of $Q$,
\item a loop $t_{i}:i\rightarrow i$ of degree $-2$ for each vertex $i\in Q_{0}$.
\end{enumerate}  

The underlying graded algebra of the Ginzburg algebra $\Gamma:=\Gamma(Q,W)$ is the completion of the graded path algebra $k\tilde{Q}$ in the category of graded vector spaces with respect to the ideal generated by the arrows of $\tilde{Q}$. The differential of $\Gamma(Q,W)$ is the unique continuous linear endomorphism homogeneous of degree 1 which satisfies the Leibniz rule $$d(uv)=(du)v+(-1)^{p}udv,$$ for all homogeneous $u$ of degree $p$ and all $v$ defined by
\begin{enumerate}
\item $da=0$ for each arrow $a$ of $Q$,
\item $d(a^{*})=\partial_{a}W$ for each arrow $a$ of $Q$,
\item $d(t_{i})=e_{i}(\sum_{a}[a,a^{*}])e_{i}$ for each vertex $i$ of $Q$ where $e_{i}$ is the lazy path at $i$.
\end{enumerate}

The Ginzburg algebra is concentrated in cohomological degrees $\leq 0$ and $H^{0}(\Gamma)$ is isomorphic to $\mathfrak{P}(Q,W)$. Let $D(\Gamma)$ be the derived category of the Ginzburg algebra and $D_{fd}(\Gamma)$ be the full subcategory of $D(\Gamma)$ formed by dg modules whose homology is of finite total dimension. For derived categories of differential graded categories see e.g. \cite{100}. The category $D_{fd}(\Gamma)$ is triangulated and 3-Calabi-Yau \cite{110}. Since $\Gamma$ is concentrated in degrees $\leq 0$ the category $D(\Gamma)$ admits a canonical t-structure whose truncation functors are those of the canonical t-structure on the category of complexes of vector spaces \cite{120}. The heart $\mathcal{A}$ of the induced t-structure on $D_{fd}(\Gamma)$ is equivalent to $nil(\mathfrak{P}(Q,W))$. The simple $\mathfrak{P}(Q,W)$-modules $S_{i}$ associated with the vertices of $Q$ are made into $\Gamma$-modules via the morphism $\Gamma\rightarrow H^{0}(\Gamma)$. In $D_{fd}(\Gamma)$ they are 3-spherical objects, i.e. we have an isomorphism $$Ext_{\Gamma}^{*}(S_{i},S_{i})\cong H^{*}(S^{3},\mathbb{C}).$$ For spherical objects in triangulated categories see \cite{125}.\\

The mutation of a 2-acyclic quiver $Q\mapsto \mu_{r}(Q)$ at a vertex $r$ admits a good extension to quivers with potential if the potential $W$ is non-degenerate \cite{80}. We denote the mutation of $(Q,W)$ at the vertex $r$ by $\mu_{r}(Q,W)$. Let $\Gamma$ be the Ginzburg algebra of $(Q,W)$ and $\Gamma'$ be the Ginzburg algebra of $\mu_{r}(Q,W)$.

\begin{thm}\cite{120}
\label{kelleryang}
Let $(Q,W)$ be a 2-acyclic quiver with non-degenerate potential. Then there are two canonical equivalences 
$$\Phi_{\pm}:D(\Gamma')\longrightarrow D(\Gamma)$$ inducing equivalences of the subcategories $$D_{fd}(\Gamma')\longrightarrow D_{fd}(\Gamma).$$ Let $\mathcal{A}'$ be the heart of the canonical t-structure on $D_{fd}(\Gamma')$. Then the equivalences $\Phi_{\pm}$ send $\mathcal{A}'$ to the hearts of two new t-structures on $D_{fd}(\Gamma)$ given by the left respectively right tilt of $\mathcal{A}$ in the sense of section 2.
\end{thm}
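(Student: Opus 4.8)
The plan is to follow the approach of Keller and Yang \cite{120}: realize $\Phi_{\pm}$ as standard derived functors attached to \emph{mutated silting bimodules}, check that they preserve the finiteness condition cutting out $D_{fd}$, and then compute the images of the simple modules in order to recognize the transported heart as a simple tilt of $\mathcal{A}$. To construct the functors, write the free dg module as $\Gamma=\bigoplus_{i\in Q_{0}}P_{i}$ with $P_{i}=e_{i}\Gamma$. At the vertex $r$ I replace the summand $P_{r}$ by the two objects
$$P_{r}^{+}=\mathrm{Cone}(P_{r}\longrightarrow E_{r}), \qquad P_{r}^{-}=\mathrm{Cone}(E_{r}'\longrightarrow P_{r})[-1],$$
where $P_{r}\to E_{r}$ is a minimal left and $E_{r}'\to P_{r}$ a minimal right $\mathrm{add}(\bigoplus_{i\neq r}P_{i})$-approximation, both assembled from the arrows of $Q$ incident to $r$, and set $T_{\pm}=P_{r}^{\pm}\oplus\bigoplus_{i\neq r}P_{i}$. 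Each $T_{\pm}$ is a silting object and a compact generator of $\mathrm{per}(\Gamma)$, so by the dg Morita theorem \cite{100} the endomorphism action makes $T_{\pm}$ a bimodule and $\Phi_{\pm}:=-\otimes^{L}_{\mathrm{End}^{\bullet}(T_{\pm})}T_{\pm}$ an equivalence onto $D(\Gamma)$ with quasi-inverse $\mathrm{RHom}_{\Gamma}(T_{\pm},-)$. The decisive input is that $\mathrm{End}^{\bullet}_{D(\Gamma)}(T_{\pm})$ is again a Ginzburg algebra, and in fact isomorphic to $\Gamma'=\Gamma(\mu_{r}(Q,W))$; here genericity of $W$ is what allows one to identify the mutated potential of \cite{80} with the data governing the differential on the endomorphism algebra, and it is this identification that pins down the source of $\Phi_{\pm}$ as $D(\Gamma')$.

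Next I would restrict to the finite-dimensional subcategories. Since each $P_{i}$, and hence $T_{\pm}$, is compact, $\Phi_{\pm}$ carries $\mathrm{per}(\Gamma')$ to $\mathrm{per}(\Gamma)$. Now $D_{fd}(\Gamma)$ is characterized intrinsically inside $D(\Gamma)$ as the objects $M$ with $\bigoplus_{n}\mathrm{Hom}_{D(\Gamma)}(P,M[n])$ finite-dimensional for every $P\in\mathrm{per}(\Gamma)$. Because $\Phi_{\pm}$ is a triangle equivalence preserving compact objects, it preserves this condition, whence $\Phi_{\pm}(D_{fd}(\Gamma'))=D_{fd}(\Gamma)$.

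Finally I would identify the heart. A direct computation with the bimodule $T_{+}$ gives $\Phi_{+}(S_{r}')\cong S_{r}[-1]$, while for $i\neq r$ the object $\Phi_{+}(S_{i}')$ is a universal extension involving $S_{i}$ and copies of $S_{r}$ governed by the relevant $\mathrm{Ext}^{1}$-group. These are precisely the simple objects of the left tilt $L_{S_{r}}(\mathcal{A})$ at the torsion pair whose torsion part is $\left\langle S_{r}\right\rangle$; in particular $S_{r}[-1]$ is the simple of $L_{S_{r}}(\mathcal{A})$ singled out in section 2. Since $\mathcal{A}'$ is a finite-length heart and $\Phi_{+}$ is an exact equivalence, $\Phi_{+}(\mathcal{A}')$ is again a finite-length heart of a bounded t-structure, hence the extension closure in $D_{fd}(\Gamma)$ of its simple objects $\Phi_{+}(S_{i}')$. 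As this class of simples coincides with that of $L_{S_{r}}(\mathcal{A})$, the two hearts are equal. The argument for $\Phi_{-}$ and the right tilt $R_{S_{r}}(\mathcal{A})$ is symmetric, using $P_{r}^{-}$ and the relation $\Phi_{-}(S_{r}')\cong S_{r}[1]$.

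The main obstacle is the identification $\mathrm{End}^{\bullet}_{D(\Gamma)}(T_{\pm})\cong\Gamma'$ of the first step: this is where the genericity of $W$ is indispensable, since it rules out the uncancellable $2$-cycles that would otherwise obstruct matching the mutated potential, and where one must control the entire dg (equivalently $A_{\infty}$) structure on the endomorphism algebra rather than merely its underlying graded quiver. Once this identification is in place the remaining steps are essentially formal.
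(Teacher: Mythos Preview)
The paper does not supply a proof of this theorem at all: it is quoted as a result of Keller--Yang \cite{120} and used as a black box, so there is no ``paper's own proof'' to compare against. Your proposal is a faithful sketch of the argument in \cite{120}: build the tilted silting objects $T_{\pm}$ from approximations of $P_{r}$, identify their dg endomorphism algebras with the Ginzburg algebra of $\mu_{r}(Q,W)$, deduce the derived equivalences $\Phi_{\pm}$ by dg Morita theory, check preservation of $D_{fd}$ via compactness, and compute the images of the simples to recognise the transported heart as $L_{S_{r}}(\mathcal{A})$ respectively $R_{S_{r}}(\mathcal{A})$. You have also correctly isolated the hard step, namely the quasi-isomorphism $\mathrm{End}^{\bullet}_{D(\Gamma)}(T_{\pm})\simeq\Gamma(\mu_{r}(Q,W))$, which in \cite{120} occupies most of the work and requires controlling the full dg structure rather than just $H^{0}$. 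One small caveat: the role of ``genericity'' (nondegeneracy in the sense of \cite{80}) is not that it enters the identification of the endomorphism algebra for a single mutation---Keller--Yang establish that step for any reduced potential---but rather that it guarantees the mutated pair $\mu_{r}(Q,W)$ is again $2$-acyclic after reduction, so that one may iterate and speak of $\Gamma'$ as a Ginzburg algebra of the same type; your phrasing slightly conflates these two points.
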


The important point for us is the following: The simple objects of $\mathcal{A}$ can be identified with the simple objects $S_{1},\ldots, S_{n}$ of $nil(\mathfrak{P}(Q,W))$ for a quiver $Q$ with $n$ vertices. They generate the heart $\mathcal{A}$. Let $(Q,W)$ be a 2-acyclic quiver $Q$ with a non-degenerate potential $W$ in the sense of \cite{80}. Theorem \ref{kelleryang} implies we can tilt indefinitely at simple objects so that we can apply Prop. \ref{prop}.\\
    
\begin{thm}
\label{theorem}
Let $(Q,W)$ be a 2-acyclic quiver $Q$ with non-degenerate potential $W$ such that we have a discrete central charge on the heart $\mathcal{A}$ of the canonical t-structure of $D_{fd}(\Gamma)$ with finitely many stable objects. Then the sequence of stable objects of $\mathcal{A}$ in the order of decreasing phase defines a sequence of simple tilts from $\mathcal{A}$ to $\mathcal{A}[-1]$. Moreover, $(Q,W)$ is Jacobi-finite.  
\end{thm}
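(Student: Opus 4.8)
The plan is to deduce the first assertion from Proposition \ref{prop} and to obtain Jacobi-finiteness from the termination of the mutation method. First I would record that the heart $\mathcal{A}$ of the canonical t-structure on $D_{fd}(\Gamma)$ is equivalent to $nil(\mathfrak{P}(Q,W))$, hence of finite length with the finitely many simple objects $S_{1},\ldots,S_{n}$. The point to check is that $\mathcal{A}$ can be tilted indefinitely. For this I would invoke Theorem \ref{kelleryang}: since $(Q,W)$ is 2-acyclic with generic potential, each simple tilt $L_{S_{i}}(\mathcal{A})$ is, up to the equivalences $\Phi_{\pm}$, the canonical heart $nil(\mathfrak{P}(\mu_{i}(Q,W)))$ of the mutated quiver with potential, which is again 2-acyclic with generic potential by \cite{80}. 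Iterating, every heart reached from $\mathcal{A}$ by a sequence of simple tilts is of this form, hence of finite length with finitely many simples. Thus the standing hypotheses of Section 4 hold and Proposition \ref{prop} applies to the discrete central charge $Z$.

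Next I would feed in the finiteness hypothesis. By the remark following Lemma \ref{lemma2}, a discrete central charge with only finitely many stable objects forces the mutation method to terminate after finitely many steps, since the exchange graph of directed simple left-tilts has no oriented cycles and only finitely many objects can occur as simples. Proposition \ref{prop} then identifies the left-most simples occurring along this finite path with the stable objects of $\mathcal{A}$, and asserts that, taken in order of decreasing phase, they give a sequence of simple tilts carrying $\mathcal{A}$ to $\mathcal{A}[-1]$. This settles the first assertion; this part is essentially bookkeeping on top of the machinery already in place.

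The harder part is Jacobi-finiteness, and this is where I expect the real obstacle. Translating the finite tilting path via Theorem \ref{kelleryang} gives a finite sequence of mutations whose composite carries the canonical heart to its shift $\mathcal{A}[-1]$, that is, a maximal green sequence for $(Q,W)$ in the sense of Section 6. The claim is that the existence of such a finite green path forces $\mathfrak{P}(Q,W)=H^{0}(\Gamma)$ to be finite-dimensional. The natural route is the contrapositive: if $\mathfrak{P}(Q,W)$ were infinite-dimensional, the shift $[-1]$ could not be realised by finitely many simple tilts, so no discrete central charge could exhaust its stable spectrum in finitely many steps. Making this precise is the crux, and I would expect to need the known correspondence between finiteness of the reachable hearts between $\mathcal{A}$ and $\mathcal{A}[-1]$ and Hom-finiteness of the associated cluster category $D(\Gamma)/D_{fd}(\Gamma)$, the latter being equivalent to Jacobi-finiteness; the quantitative input would be boundedness of the finitely many dimension vectors of the stable objects together with the fact that they generate $K(\mathcal{A})$. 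Establishing this implication rigorously — rather than the merely formal termination of the algorithm — is the step I would budget the most effort for.
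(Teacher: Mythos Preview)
Your handling of the first assertion coincides with the paper's: the paragraph immediately preceding the theorem already records that Theorem~\ref{kelleryang} guarantees indefinite tiltability of $\mathcal{A}$, so Proposition~\ref{prop} applies, and the remark following Lemma~\ref{lemma2} gives termination under the finiteness hypothesis. The paper's proof accordingly opens with ``We only have to prove the last statement.''

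For Jacobi-finiteness the paper takes a much shorter route than you propose: it simply cites Proposition~8.1 of \cite{155} (Br\"ustle--Dupont--P\'erotin), which asserts exactly that a quiver with potential admitting a maximal green sequence is Jacobi-finite. The finite chain of simple tilts from $\mathcal{A}$ to $\mathcal{A}[-1]$ just produced is such a sequence, so the conclusion is immediate by reference. Your instinct that this implication is where the genuine content lies is correct, and your contrapositive heuristic via Hom-finiteness of the cluster category $D(\Gamma)/D_{fd}(\Gamma)$ points in a reasonable direction, but the paper does not attempt any of this; it outsources the entire step. So your proposal is not wrong, but it budgets substantial effort for an argument that the paper disposes of in one citation.
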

\begin{proof}
We only have to prove the last statement. But this is an immediate consequence of Prop. 8.1 in \cite{155}.
\end{proof}

The dimensions of $Ext^{1}$-groups between the simple objects $S_{1},\ldots, S_{n}$ are given by the quiver $Q$:
\begin{align}
\text{dim } Ext^{*}(S_{j},S_{i})=\# (\text{arrows } i\longrightarrow j \text{ in Q}). \nonumber
\end{align}

Together with Theorem \ref{kelleryang} follows  
 
\begin{lem} 
The sequence of mutations of $Q$ modeled by the sequence of simple tilts in the mutation method of section 4 linking the set $(S_{1},\ldots, S_{n})$ to the set $(S_{1}[-1],\ldots, S_{n}[-1])$ gives back the original quiver $Q$ (up to permutation of the vertices).   
\end{lem}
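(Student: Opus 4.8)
The plan is to track how each simple tilt in the mutation method transforms the quiver, using the dictionary between $\mathrm{Ext}^1$-dimensions and arrows together with Theorem \ref{kelleryang}, which identifies a simple left (or right) tilt of the heart with the mutation $\mu_r$ of the quiver with potential. By Prop. \ref{prop} the mutation method tilts successively at the stable objects of $\mathcal{A}$ in order of decreasing phase, carrying $\mathcal{A}$ to $\mathcal{A}[-1]$ through a finite chain of intermediate hearts; each such heart is, by Theorem \ref{kelleryang}, the heart of the canonical t-structure on $D_{fd}(\Gamma_k)$ for the Ginzburg algebra $\Gamma_k$ of a quiver with potential $(Q_k,W_k)$ obtained from $(Q,W)$ by a corresponding mutation. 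The goal is to show that the composite of these mutations returns the original quiver up to relabelling of vertices.

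**First I would** make precise the correspondence at the level of a single step. At the $k$-th step we left-tilt at a left-most simple $S$, and Theorem \ref{kelleryang} tells us the new heart $L_S(\mathcal{A}_k)$ is $\Phi_{\pm}(\mathcal{A}_{k+1}')$ for the mutated quiver with potential; the quiver $Q_{k+1}$ is recovered from $Q_k$ by reading off $\dim \mathrm{Ext}^1$ between the new simple objects via the stated formula $\dim\,\mathrm{Ext}^*(S_i,S_j)=\#(\text{arrows } j\to i)$. Thus the sequence of hearts is mirrored one-to-one by a sequence of quiver mutations $Q=Q_0 \mapsto Q_1 \mapsto \cdots \mapsto Q_N = Q'$, and the task reduces to identifying the terminal quiver $Q'$.

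**Next I would** exploit the fact that the endpoint of the mutation method is the shifted heart $\mathcal{A}[-1]$, whose simple objects are exactly $S_1[-1],\ldots,S_n[-1]$. Since shifting by $[-1]$ is an autoequivalence of $D_{fd}(\Gamma)$ and preserves all $\mathrm{Ext}^*$-groups (it acts as an isomorphism on the Grothendieck group compatible with the $\mathrm{Ext}$-pairing), we have $\dim\,\mathrm{Ext}^*(S_i[-1],S_j[-1])=\dim\,\mathrm{Ext}^*(S_i,S_j)$ for all $i,j$. By the $\mathrm{Ext}^1$-to-arrows dictionary, the quiver of the final heart $\mathcal{A}[-1]$ therefore has the same arrow-count data as the quiver of $\mathcal{A}$, namely $Q$ itself. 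Hence $Q' = Q$ up to a permutation of the vertices, which is precisely a relabelling. (The permutation arises because the stable objects are tilted in phase order, which need not match the initial vertex labelling.)

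**The main obstacle** I expect is making rigorous the claim that the quiver is fully recovered from the $\mathrm{Ext}^1$-data together with genericity, i.e. that the combinatorial mutation sequence is faithfully determined by the categorical tilting sequence rather than merely compatible with it. One must invoke that $(Q,W)$ is $2$-acyclic with generic potential, so that at each step the mutated potential remains generic and the $2$-cycle cancellation in rule (3) of the quiver mutation is correctly modeled by the tilting (this is exactly what Theorem \ref{kelleryang} guarantees for generic $W$); without genericity the categorical and combinatorial mutations could diverge. I would therefore state carefully that the identification $Q'\cong Q$ holds as quivers (arrow multiplicities between all pairs of vertices), leaving the potential implicit, and conclude that the mutation sequence returns $Q$ up to permutation of vertices.
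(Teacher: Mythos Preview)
Your proposal is correct and follows exactly the approach the paper indicates: the paper merely states that the lemma ``follows'' from the $\mathrm{Ext}^1$-to-arrows formula together with Theorem~\ref{kelleryang}, and you have spelled out precisely this argument---each simple tilt corresponds to a quiver mutation by Theorem~\ref{kelleryang}, and since the terminal heart $\mathcal{A}[-1]$ has simples $S_i[-1]$ with the same $\mathrm{Ext}^*$-data as the $S_i$ (shift being an autoequivalence), the final quiver coincides with $Q$ up to relabelling. Your discussion of the ``main obstacle'' (genericity ensuring that combinatorial and categorical mutation agree at every step) is a reasonable caution, though the paper simply takes this as built into the hypotheses via Theorem~\ref{kelleryang}.
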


Let us consider an acyclic quiver $Q$. The category $\mathcal{H}_{Q}:=mod-kQ$ is the heart of the canonical t-structure of the bounded derived category of $\mathcal{H}_{Q}$. $\mathcal{H}_{Q}$ is equivalent to the heart $\mathcal{A}$ of the canonical t-structure of $D_{fd}(\Gamma)$ for the Ginzburg algebra $\Gamma$ of $Q$. We assume we have finitely many stable objects (or equivalently we rotate by finitely many steps in the mutation method).

\begin{cor}
The stable objects of $\mathcal{H}_{Q}$ in the order of decreasing phase induce a sequence of simple tilts from $\mathcal{A}$ to $\mathcal{A}[-1]$ and the stable objects of $\mathcal{A}$ in the order of decreasing phase induce a sequence of simple tilts from $\mathcal{H}_{Q}$ to $\mathcal{H}_{Q}[-1]$.
\end{cor}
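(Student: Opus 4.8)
The plan is to apply Proposition \ref{prop} independently to the two hearts $\mathcal{A}\subset D_{fd}(\Gamma)$ and $\mathcal{H}_{Q}\subset D^{b}(\mathcal{H}_{Q})$, and then to identify the two resulting sequences of stable objects by means of the equivalence $\mathcal{H}_{Q}\simeq\mathcal{A}$. The hypotheses of the mutation method of section 4 are in place for both hearts: we can tilt $\mathcal{A}$ indefinitely by Theorem \ref{kelleryang} and $\mathcal{H}_{Q}$ indefinitely by theorem 5.7 in \cite{70}; both hearts are of finite length with the same simple objects $S_{1},\ldots,S_{n}$; and the assumption of finitely many stable objects is granted.

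The key step is to transport stability across the equivalence. The equivalence of abelian categories induces an isomorphism $K(\mathcal{H}_{Q})\cong K(\mathcal{A})$ sending $[S_{i}]$ to $[S_{i}]$, so it is the identity on the basis of classes of simples; a single discrete central charge $Z$ on this common lattice therefore restricts to both hearts with the same values $Z(S_{i})$ and, for corresponding objects, the same phase $\phi(E)=\tfrac{1}{\pi}\arg Z(E)$. Since (semi)stability of an object is determined purely by the abelian structure---its subobjects and short exact sequences---together with the values of $Z$ on the Grothendieck group, and since the equivalence preserves exactly these data, it carries stable objects to stable objects and preserves their phases. Hence the stable objects of $\mathcal{H}_{Q}$ and those of $\mathcal{A}$ coincide under the equivalence, and ordering them by decreasing phase yields one and the same sequence $X_{1},\ldots,X_{m}$ in both categories.

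It then remains only to read off the two assertions. By Proposition \ref{prop} applied in $D_{fd}(\Gamma)$ the sequence $X_{1},\ldots,X_{m}$ gives a sequence of simple tilts from $\mathcal{A}$ to $\mathcal{A}[-1]$; as this sequence is exactly the list of stable objects of $\mathcal{H}_{Q}$, this is the first claim. Symmetrically, Proposition \ref{prop} applied in $D^{b}(\mathcal{H}_{Q})$ shows that the same sequence gives simple tilts from $\mathcal{H}_{Q}$ to $\mathcal{H}_{Q}[-1]$, which is the second claim. I expect the only genuine obstacle to lie in the key step above: one must argue that an equivalence of abelian categories respecting the central charge preserves the whole Harder--Narasimhan and stability formalism, not merely the simple objects. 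Once that is secured---and it is forced by the identification of Grothendieck groups on the basis of simples---the corollary follows directly from Proposition \ref{prop} in the two ambient triangulated categories, even though these categories ($3$-Calabi--Yau versus hereditary) and their intermediate tilted hearts are genuinely different.
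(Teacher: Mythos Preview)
Your proposal is correct and matches the paper's (implicit) argument. The paper does not spell out a proof for this corollary, but the surrounding text makes clear it is meant to follow exactly as you argue: from the equivalence $\mathcal{H}_{Q}\simeq\mathcal{A}$ of abelian categories (so that stable objects and their phases for a common central charge coincide), together with the fact that both hearts admit indefinite simple tilting (theorem~5.7 of \cite{70} for $\mathcal{H}_{Q}$, Theorem~\ref{kelleryang} for $\mathcal{A}$), allowing Proposition~\ref{prop} to be applied in each ambient triangulated category separately. Your hedge about the ``key step'' is unnecessary: an equivalence of abelian categories preserves subobjects and short exact sequences, hence (semi)stability for a fixed $Z$, so this step is routine.
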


An isomorphism of entire exchange graphs for the two derived categories associated to an acyclic quiver is constructed in \cite{70}.

\section{Maximal green sequences}

In this section we relate stable objects to maximal green mutation sequences as introduced by B. Keller in \cite{140}.\\

Let us consider a 2-acyclic quiver $Q$ with $n$ vertices. Let $\tilde{Q}$ be the \textit{principal extension of $Q$}, i.e. the quiver obtained from $Q$ by adding a new vertex $i':=i+n$ and a new arrow $i\rightarrow i'$ for each vertex $i\in Q_{0}$. The new vertices $i'$ are called frozen and we will never mutate at them. Here is an example:
\begin{align}
\label{a2greenred}
Q:\hspace{.5pc}\xymatrix{1 \ar[r]& 2}, \hspace{2.5pc}\widetilde{Q}:\hspace{.5pc}\xymatrix{1\ar[r]\ar[d] & 2\ar[d]\\
       1'& 2'}
\end{align}

\begin{defn}\cite{140}
A vertex $j$ of a quiver in the mutation class of $\widetilde{Q}$ is called \textit{green} if there are no arrows from a frozen vertex $i'$ to $j$ and \textit{red} otherwise. A \textit{green (mutation) sequence} on $\widetilde{Q}$ is a mutation sequence such that every mutation in the sequence is at a green vertex in the corresponding quiver. A green sequence is \textit{maximal} if all vertices of the final quiver are red. The \textit{length} of a green mutation sequence is the number of mutations in the sequence.
\end{defn} 

If in the example above we begin mutating at vertex $2$ we find the maximal green sequence
$$\xymatrix{1\ar[d]\ar[rd]& 2\ar[l]\\
       1'& 2'\ar[u]} \hspace{2.5pc} \xymatrix{1\ar[r]& 2\ar[ld]\\
       1'\ar[u]& 2'\ar[lu]}\hspace{2.5pc} \xymatrix{1& 2\ar[l]\\
       1'\ar[ru]& 2'\ar[lu]}$$
Starting at vertex $1$ we find
$$\xymatrix{1& 2\ar[l]\ar[d]\\
       1'\ar[u]& 2'} \hspace{2.5pc} \xymatrix{1\ar[r]& 2\\
       1'\ar[u]& 2'\ar[u]}$$

Let us consider a sequence of simple tilts as in Theorem \ref{theorem}. They define a sequence of nearby cluster collections (see section 7) and give therefore a sequence of green mutations as explained in \cite{140, 100}.

\begin{prop}
\label{prop2}
Let $(Q,W)$ be a quiver with potential as in Theorem \ref{theorem}. Then the stable objects of $\mathcal{A}$ define a maximal green mutation sequence of $\widetilde{Q}$ with length given by the number of stable objects.
\end{prop}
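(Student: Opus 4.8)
The plan is to use the dictionary between the mutation method of section 4 and quiver mutation supplied by Theorem \ref{kelleryang}, together with the c-vector/green-vertex formalism of \cite{140}. Concretely, each heart $\mathcal{A}'$ appearing in the mutation sequence from $\mathcal{A}$ to $\mathcal{A}[-1]$ corresponds, via $\Phi_{\pm}$, to a quiver in the mutation class of $\widetilde{Q}$; the initial heart $\mathcal{A}$ corresponds to $\widetilde{Q}$ itself, in which the only arrows involving frozen vertices are the arrows $i\to i'$, so that every ordinary vertex is green. By the discussion preceding the statement, the sequence of simple tilts yields a sequence of nearby cluster collections and hence a sequence of mutations, and I want to upgrade this to a \emph{maximal green} sequence of the prescribed length. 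The color of a vertex will be read off from whether the associated simple of the current heart lies in $\mathcal{A}$ or in $\mathcal{A}[-1]$.

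First I would make precise the green/red dictionary. By the proof of Prop. \ref{prop}, every simple object $T$ of an intermediate heart lies either in $\mathcal{A}$ or in $\mathcal{A}[-1]$, so its class $[T]\in K(\mathcal{A})$, written in the basis $[S_{1}],\ldots,[S_{n}]$, is a nonnegative combination when $T\in\mathcal{A}$ and a nonpositive one when $T\in\mathcal{A}[-1]$. This is exactly the sign-coherence of the c-vectors recorded by the arrows between the frozen and ordinary vertices of the framed quiver. I claim the vertex is green precisely when $T\in\mathcal{A}$ and red precisely when $T\in\mathcal{A}[-1]$: the frozen part of $\widetilde{Q}$ tracks the c-vector, and a nonnegative c-vector is equivalent to there being no arrows from a frozen vertex into the ordinary one. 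This identification is the content hidden in the cluster-collection description of section 7, and it is the step I expect to require the most care.

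Granting the dictionary, the green-sequence property is immediate. In the mutation method we always tilt at the left-most simple, which by Prop. \ref{prop} lies in $\mathcal{A}$ and is therefore a green vertex; hence every mutation in the induced sequence is green. For maximality, the mutation method terminates at $\mathcal{A}[-1]$, whose simple objects are $S_{1}[-1],\ldots,S_{n}[-1]\in\mathcal{A}[-1]$; under the dictionary all corresponding vertices are red, which is exactly the condition for the green sequence to be maximal.

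Finally, the length. By Prop. \ref{prop} the mutation method tilts exactly once at each stable object of $\mathcal{A}$, and each such left tilt corresponds under $\Phi_{\pm}$ to a single mutation of $\widetilde{Q}$; hence the number of mutations in the maximal green sequence equals the number of stable objects of $\mathcal{A}$. The main obstacle is the faithful translation of the sign of a c-vector into the color of a vertex, that is, verifying that a simple of an intermediate heart belongs to $\mathcal{A}$ exactly when the corresponding vertex receives no arrows from the frozen part; this is where one must carefully invoke the nearby cluster collections of section 7 and the green-mutation results of \cite{140}.
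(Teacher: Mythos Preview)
Your proposal is correct and follows essentially the same route as the paper. The paper itself gives no explicit proof: it simply observes, in the sentence preceding the proposition, that the sequence of simple tilts from Theorem \ref{theorem} yields a sequence of nearby cluster collections and hence, by \cite{140}, a sequence of green mutations; maximality and the length count are left implicit. You have unpacked exactly this, correctly isolating the one nontrivial ingredient---the identification of ``simple in $\mathcal{A}$'' with ``green vertex'' via sign-coherence of c-vectors---and correctly attributing it to the nearby-cluster-collection formalism and \cite{140}.
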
    

Let $Q$ be an acyclic quiver. Since we can find a central charge such that the stable objects are exactly the simple objects of $\mathcal{H}_{Q}$, the set of maximal green mutations of $Q$ is non-empty. If $Q$ is a Dynkin quiver there is a discrete central charge with stable objects given by all indecomposable objects. Therefore we can find a maximal green sequence of length equal to the number of indecomposables.

\section{Refined Donaldson-Thomas invariants}

We can associate to a quiver with potential a refined Donaldson-Thomas invariant \cite{150, 160}. In this section we choose $k=\mathbb{C}$ and we closely follow \cite{140}.\\

Let $Q$ be a finite quiver with $n$ vertices. The quantum affine space $\mathbb{A}_{Q}$ is the $\mathbb{Q}(q^{1/2})$-algebra generated by the variables $y^{\alpha},\alpha\in\mathbb{N}^{n}$, subject to the relations $$y^{\alpha}y^{\beta}=q^{1/2\lambda(\alpha,\beta)}y^{\alpha+\beta}$$ where $\lambda(\ ,\ )$ is the antisymmetrization of the Euler form of $Q$. Equivalently, $\mathbb{A}_{Q}$ is generated by the variables $y_{i}:=y^{e_{i}}, 1\leq i\leq n$ subject to the relations $$y_{i}y_{j}=q^{\lambda(e_{i},e_{j})}y_{j}y_{i}.$$ We denote by $\hat{\mathbb{A}}_{Q}$ the completion of $\mathbb{A}_{Q}$ with respect to the ideal generated by the $y_{i}$.\\  

Let $(Q,W)$ be a quiver with potential and we assume we can find a discrete central charge $Z$ on $nil(\mathfrak{P}(Q,W))$. The refined Donaldson-Thomas invariant is defined to be the product in $\hat{\mathbb{A}}_{Q}$ $$\mathbb{E}_{Q,W,Z}:=\vec{\prod}_{M\ stable}\mathbb{E}(y^{\underline{dim}\ M})$$ where the stable modules with respect to the discrete central charge appear in the order of decreasing phase. $\mathbb{E}(y)$ is the quantum dilogarithm \cite{175}, i.e. the element in the power series algebra $\mathbb{Q}(q^{1/2})[[y]]$ defined by
\begin{align}
\mathbb{E}(y)=1+\frac{q^{1/2}}{q-1}y+\ldots + \frac{q^{n^{2}/2}}{(q^{n}-1)(q^{n}-q)\cdots (q^{n}-q^{n-1})}y^{n}+\ldots. \nonumber
\end{align}

The invariant $\mathbb{E}_{Q,W,Z}$ is of course only well defined if it does not depend on the choice of a discrete central charge $Z$. (This is conjecture 3.2 in \cite{140}.) If it is well-defined we denote it by $\mathbb{E}_{Q,W}$.\\

The set of simple objects ($S_{1},\ldots, S_{n}$) of the heart $\mathcal{A}$ of the canonical t-structure of $D_{fd}(\Gamma)$ is a cluster collection.
 
\begin{defn}\cite{160}
A \textit{cluster collection} $S'$ is a sequence of objects $S'_{1},\ldots, S'_{n}$ of $D_{fd}(\Gamma)$ such that

\begin{enumerate}
\item the $S'_{i}$ are spherical,
\item $Ext^{*}(S'_{i},S'_{j})$ vanishes or is concentrated either in degree 1 or degree 2 for $i\neq j$,
\item the $S'_{i}$ generate the triangulated category $D_{fd}(\Gamma)$.
\end{enumerate}
\end{defn}

In our case the cluster collection $S_{1},\ldots, S_{n}$ is linked to the cluster collection $S_{1}[-1],\ldots, S_{n}[-1]$ by a sequence of simple tilts and permutations. 
The functor $[-1]$ is therefore a \textit{reachable} functor for $D_{fd}(\Gamma)$ in the sense of \cite{140, 180}. A functor $F:D_{fd}(\Gamma)\rightarrow D_{fd}(\Gamma)$ is reachable if there is a sequence of mutations and permutations from the initial cluster collection $(S_{1},\ldots, S_{n})$ to $(F(S_{1}),\ldots, F(S_{n}))$.\\

A quiver $Q$ has a associated braid group $Braid(Q)$ which acts on $D_{fd}(\Gamma)$. Keller and Nicol\'as prove that there is a canonical bijection between the set of $Braid(Q)$-orbits of reachable cluster collections and reachable cluster-tilting sequences in the cluster category associated to $D_{fd}(Q)$ \cite{140, 180}. A discrete central charge with finitely many stable objects induces reachable cluster collections. We can view the images of these cluster collections as 'stable' objects in the cluster category.\\

A cluster collection $S'$ is \textit{nearby} if the associated heart $\mathcal{A}'$ is the left-tilt of some torsion pair in $\mathcal{A}$. A sequence of simple tilts at objects of $\mathcal{A}$ starting at the initial cluster collection $S$ gives a sequence of nearby cluster collections 
\begin{align}
S=S^{0}\longrightarrow S^{1}\longrightarrow\cdots\longrightarrow S^{N}. \nonumber
\end{align}
 For a sequence of reachable nearby cluster collections given by a sequence of simple tilts B. Keller introduced in \cite{140} the invariant in $\mathbb{A}_{Q}$
\begin{align}
\label{invariant}
\mathbb{E}(\epsilon_{1}\beta_{1})^{\epsilon_{1}}\cdots\mathbb{E}(\epsilon_{N}\beta_{N})^{\epsilon_{N}}
\end{align}
where $\beta_{i},1\leq i \leq N$ is the class of the i-th simple object on that we tilt. If this object is an element of $\mathcal{A}$ we set $\epsilon_{i}=+1$, if it is an element of $\mathcal{A}[-1]$ we set $\epsilon_{i}=-1$.  
  
\begin{thm}\cite{100,140}
Let be given sequences of reachable nearby cluster collections as described above with the same final nearby cluster collection. Then the invariant \ref{invariant} does not depend on the choice of a sequence.  
\end{thm}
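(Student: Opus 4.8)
The plan is to recast path-independence as a statement about the fundamental groupoid of the region of the exchange graph of $D_{fd}(\Gamma)$ swept out by the nearby cluster collections, and then to reduce invariance of (\ref{invariant}) to two elementary local identities for the quantum dilogarithm. Each admissible sequence of simple tilts
\[
S = S^{0}\longrightarrow S^{1}\longrightarrow\cdots\longrightarrow S^{N}
\]
is an edge-path from the vertex $\mathcal{A}$ to the common final heart; two sequences with the same endpoints give two such paths, and (\ref{invariant}) is a product assigned to each path. The goal is to show that this product depends only on the homotopy class of the path and that the relevant subcomplex is simply connected, so that all paths with fixed endpoints are homotopic.

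First I would enumerate the $2$-cells of this subcomplex by analysing a single pair of consecutive tilts. Tilting at a simple $S$ and then at a simple $S'$ of the tilted heart is governed by $\mathrm{Ext}^{*}(S,S')$, and Lemma \ref{nagaobridgeland} controls exactly when such a composite of left tilts is again a left tilt of $\mathcal{A}$. When $\mathrm{Ext}^{*}(S,S')=0$ the two tilts commute and bound a \emph{square}; when the Ext-space is one-dimensional they bound a \emph{pentagon}, reproducing the $A_{2}$ exchange pattern. The main obstacle will be to prove that these squares and pentagons generate all relations, i.e. that the nearby region is simply connected with $2$-skeleton consisting precisely of commutation squares and pentagons. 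For this I would use the identification, via Prop. \ref{lemma}, of the nearby cluster collections with the closures of cells in $Stab(\mathcal{D})$ glued along their codimension-one walls; the wall-and-chamber geometry then supplies exactly these local pieces, as carried out in \cite{100,140}.

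Finally I would check that each elementary homotopy fixes (\ref{invariant}). Across a square the two exponents have vanishing antisymmetrized Euler form, $\lambda(\beta,\beta')=0$, so the corresponding quantum dilogarithms commute in $\hat{\mathbb{A}}_{Q}$ and the two orderings agree. Across a pentagon the one-dimensional Ext forces $\lambda(\beta,\beta')=\pm1$, and the Faddeev--Kashaev pentagon identity
\[
\mathbb{E}(y^{\beta'})\,\mathbb{E}(y^{\beta})=\mathbb{E}(y^{\beta})\,\mathbb{E}(y^{\beta+\beta'})\,\mathbb{E}(y^{\beta'})
\]
equates the two ways around the pentagon. Here one must track the signs $\epsilon_{i}$, which record whether the tilted simple lies in $\mathcal{A}$ or in $\mathcal{A}[-1]$, so that the classes $\beta_{i}$ and their exponents line up on both sides of each identity. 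Combining simple connectivity with these two local invariances shows that (\ref{invariant}) is constant on paths with fixed endpoints, which is the assertion.
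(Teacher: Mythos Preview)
The paper does not prove this theorem at all: it is quoted from \cite{100,140} as a black box and then applied to obtain Proposition~7.1. So there is no in-paper argument to compare against; the relevant comparison is with Keller's own proof in the cited references.

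Your sketch has a genuine gap. The local analysis you propose---$\mathrm{Ext}^{*}(S,S')=0$ yields a commutation square, one-dimensional $\mathrm{Ext}$ yields a pentagon---does not cover all cases. For a general 2-acyclic quiver with potential one can have $\dim\mathrm{Ext}^{1}(S,S')\geq 2$ between adjacent simples (already the Kronecker quiver discussed in Section~5 has a double arrow). In that situation the two orders of tilting are \emph{not} the boundary of any finite polygon of simple tilts, and there is no higher analogue of the Faddeev--Kashaev identity equating the two products of quantum dilogarithms. Hence the assertion that ``squares and pentagons generate all relations'' is false in general, and appealing to \cite{100,140} for it does not help, because that is not what those papers prove.

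Keller's argument is of a different nature. Rather than presenting the exchange groupoid by $2$-cells and checking local identities, he shows (via Nagao's Hall-algebra identities \cite{190} and the tropical/cluster framework) that the product (\ref{invariant}) is determined by the torsion pair in $\mathcal{A}$ corresponding to the final nearby heart---equivalently, by the $c$-vectors/$g$-vectors comparing initial and final cluster collections. Path-independence then follows because the answer is expressed intrinsically in terms of the endpoints, with no cell-by-cell verification required. Your approach would recover the special cases where the exchange graph is locally of finite cluster type, but not the theorem as stated.
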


In our case we tilt at the stable objects of $\mathcal{A}$ in the order of decreasing phase.

\begin{prop}
Let $(Q,W)$ be a quiver with non-degenerate potential as in Theorem \ref{theorem}. Then the refined Donaldson-Thomas invariant $\mathbb{E}_{Q,W,Z}$ does not depend on the chosen discrete central charge $Z:K(\mathcal{A})\rightarrow\mathbb{C}$ with finitely many stable objects.
\end{prop}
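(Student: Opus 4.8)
The plan is to recognize the refined Donaldson--Thomas invariant $\mathbb{E}_{Q,W,Z}$ as a special case of Keller's tilting invariant \eqref{invariant} and then to apply the invariance Theorem \cite{100,140}. First I would invoke Theorem \ref{theorem}: for a discrete central charge $Z$ with finitely many stable objects $M_{1},\ldots,M_{N}$, these objects listed in order of decreasing phase define a sequence of simple tilts
\begin{align}
\mathcal{A}=\mathcal{A}_{0}\longrightarrow\mathcal{A}_{1}\longrightarrow\cdots\longrightarrow\mathcal{A}_{N}=\mathcal{A}[-1]. \nonumber
\end{align}
By the proof of Prop. \ref{prop} we tilt at each step at the left-most simple, which is always an object of $\mathcal{A}$ rather than of $\mathcal{A}[-1]$. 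Hence in \eqref{invariant} every sign equals $\epsilon_{i}=+1$ and every class equals $\beta_{i}=\underline{dim}\,M_{i}$.

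Since all signs are positive, the product \eqref{invariant} collapses to $\mathbb{E}(\beta_{1})\cdots\mathbb{E}(\beta_{N})=\prod_{i}\mathbb{E}(y^{\underline{dim}\,M_{i}})$, which is exactly the defining product of $\mathbb{E}_{Q,W,Z}$. Thus, for each admissible $Z$, the refined DT invariant is realized as the invariant \eqref{invariant} attached to the mutation-method sequence for $Z$.

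Next I would record, as explained above, that a sequence of simple tilts at objects of $\mathcal{A}$ is a sequence of \emph{reachable nearby} cluster collections beginning at the initial cluster collection $(S_{1},\ldots,S_{n})$, and that its terminal collection is $(S_{1}[-1],\ldots,S_{n}[-1])$, the simple objects of $\mathcal{A}[-1]$. The decisive observation is that this terminal collection is the same for every admissible central charge: any two discrete central charges with finitely many stable objects yield sequences sharing the source $\mathcal{A}$ and the target $\mathcal{A}[-1]$, even though their lengths $N$ may differ.

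Finally I would apply Theorem \cite{100,140}, which asserts that \eqref{invariant} depends only on the common terminal nearby cluster collection and not on the chosen sequence. Comparing the sequences arising from two central charges $Z$ and $Z'$ then gives $\mathbb{E}_{Q,W,Z}=\mathbb{E}_{Q,W,Z'}$, the asserted independence. I expect the only delicate point to be the verification that the mutation-method sequence genuinely qualifies as a reachable sequence of nearby cluster collections in the precise sense demanded by Theorem \cite{100,140} --- that each simple tilt at a stable object of $\mathcal{A}$ corresponds to the quiver mutation underlying Keller's construction; once this identification is secured, the invariance is immediate.
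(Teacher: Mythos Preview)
Your proposal is correct and follows exactly the argument the paper intends: identify $\mathbb{E}_{Q,W,Z}$ with Keller's invariant \eqref{invariant} via Theorem~\ref{theorem} and Prop.~\ref{prop} (all $\epsilon_i=+1$), observe that every such sequence ends at the fixed nearby cluster collection $(S_1[-1],\ldots,S_n[-1])$, and conclude by the cited invariance theorem of \cite{100,140}. The paper leaves this proposition without an explicit proof, treating it as immediate from the sentence ``In our case we tilt at the stable objects of $\mathcal{A}$ in the order of decreasing phase'' together with the preceding discussion of reachable nearby cluster collections; your write-up simply makes that reasoning explicit.
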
 

Note that the potential does not have to be polynomial. In the case of a Dynkin quiver this proves the identities of Reineke \cite{150}.

\section*{Acknowledgements} 
I thank Sergio Cecotti, Dmytro Shklyarov, Katrin Wendland, Dan Xie and in particular Bernhard Keller for helpful discussions or correspondences. This research was supported by the DFG-Graduiertenkolleg GRK 1821 "Cohomological Methods in Geometry" at the University of Freiburg. I thank the Simons Center for Geometry and Physics in Stony Brook for hospitality. Part of this work was carried out during a visit in Stony Brook.

\end{document}